\newtheorem*{theorem*}{Theorem}
\newtheorem{theorem}{Theorem}[section]
\newtheorem{proposition}[theorem]{Proposition}
\newtheorem{corollary}[theorem]{Corollary}
\newtheorem{lemma}[theorem]{Lemma}
\theoremstyle{definition}
\newtheorem{definition}[theorem]{Definition}
\newtheorem{example}[theorem]{Example}
\theoremstyle{remark}
\newtheorem{remark}[theorem]{Remark}
\numberwithin{equation}{section}
\newcommand{\gc}{\Gamma_c(x, f)}
\def\N{{\mathbb N}}
\def\Z{{\mathbb Z}}
\def\Q{{\mathbb Q}}
\def\R{{\mathbb R}}
\def\U{{\mathcal U}}
\begin{document}

\begin{large}

%_________________________Pleae insert full name and address of authors______________________________________
\title{On Expansive Maps of Topological Spaces}
\author[A. Barzanouni, E. Shah]{Ali Barzanouni and Ekta Shah}
  %  Address of record for the research reported here
  %  Address of record for the research reported here
\address{Department of Mathematics, School of Mathematical Sciences, Hakim Sabzevari University, Sabzevar, Iran}
\address{Department of Mathematics, Faculty of Science, The Maharaja Sayajirao University of Baroda, Vadodara, India}
\email{barzanouniali@gmail.com}
\email{ekta19001@gmail.com; shah.ekta-math@msubaroda.ac.in}
\subjclass[2010]{Primary 54H20, 37B20, 37C50}

\keywords{ Expansive homeomorphism, orbit expansive homeomorphism, uniform spaces }
\maketitle

\begin{abstract}
We show that if there exists a topologically expansive homeomorphism on a uniform space, then the space is always a regular space. Through examples we show that in general composition of topologically expansive homeomorphisms need not be topological expansive and also that conjugate of topologically expansive homeomorphism need not be topological expansive. Further, we obtain a characterization of orbit expansivity in terms of topological expansivity and conclude that if there exists a topologically expansive homeomorphism on a compact uniform space then the space must be metrizable. We also study positively expansive maps on topological space and obtain condition for maps  to be positively topological expansive in terms of finite open cover.  Further, we show that if there exists a continuous, one-to-one, positively topological expansive map on  a compact uniform space, then the space is finite. We also give an example of a positively topological expansive map on a non--Hausdorff space.
\end{abstract}
\section{Introduction}

\bigskip

\noindent  A homeomorphism  $h : X \longrightarrow X$ defined on metric space $X$ is said to be an \textit{expansive homeomorphism} provided there exists a real number $c>0$ such that whenever $x, y \in X$ with $x\not=y$ then there exists an integer $n$ (depending on $x, y$) satisfying  $d(h^n(x), h^n(y))>c$. Constant $c$ is called an \textit{expansive constant} for $h$. In 1950, Utz, \cite{U}, introduced it with the name unstable homeomorphisms. The examples discussed in this paper on compact spaces were sub dynamics of shift maps, thus one can say that the theory of expansive homeomorphisms started based on symbolic dynamics but it quickly developed by itself.

\bigskip

\noindent Much attention has been paid to the existence / non--existence of expansive homeomorphisms on given spaces. Each compact metric space that admits an expansive homeomorphism is finite-dimensional~\cite{rm}. The spaces admitting expansive homeomorphisms include the Cantor set, the real line/half-line, all open $n-$cells, $n\geq 2$ \cite{Kou}. On the other hand, spaces not admitting expansive homeomorphisms includes any Peano continuum in the plane \cite{HK-2}, the 2-sphere the projective plane and the Klein bottle \cite{Hi0}.

\bigskip
\noindent Another important aspects of expansive dynamical system is the study of its various generalizations and variations in different setting. The very first of such variation was given by Schwartzman, \cite{SS}, in 1952 in terms of positively expansive maps, wherein the points gets separated by non--negative iterates of the continuous map. In 1970, Reddy, \cite{wr}, studied point--wise expansive maps whereas $h-$expansivity was studied by R. Bowen, \cite{rb}. Kato defined and studied the notion of continuum--wise expansive homeomorphism \cite{HK}. Second author of the paper studied notion of positive expansivity of maps on metric $G-$spaces \cite{es} whereas the first author studied finite expansive homeomorphisms \cite{ali}. Tarun Das \emph{et al.} \cite{T.D} used the notion of expansive homeomorphism on topological space to prove the Spectral Decomposition Theorem on non--compact spaces. Achigar \emph{et al.} studied the notion of orbit expansivity on non--Hausdorff space \cite{art}. We studied expansivity for group actions in \cite{ase}. In this paper we study expansive homeomorphisms on uniform spaces and positively expansive maps on topological spaces.

  % Since there exist distinct uniformity such that they induce a first countable, locally compact, paracompact, and
%Hausdorff space, hence if uniformity $\mathcal{U}_1$ and uniformity $\mathcal{U}_2$ induce the same topology, then it may be happen that $f$ is topologically expansive with respect $\mathcal{U}_1$ while $f$ is not topologically expansive with respect $\mathcal{U}_2$, see Example \ref{linear}. But since uniform structure of a compact Hausdoroff space is unique, we can say that topologically expansive on compact uniform space does not depend on choice of
%uniform space. In Proposition \ref{t1}, we show that if uniform space $X$ admits a topologically expansive homeomorphism then $X$ is Hausdorff.

% For expansive homeomorphism $f$ on compact metric space $(X, d)$, there is an open cover $\mathcal{U}=\{U_1, U_2, \ldots,U_n\}$, such that  for every $x\neq y\in X$ there is $m\in\mathbb{Z}$ such that $\{f^{m}(x), f^m(y)\}\nsubseteq U_i$ for all $i=1, 2, \ldots, n$. Authors in \cite{ma} used this property of  expansivity and introduced  orbit  expansive homeomorphism and proved that  if a compact Hausdorff topological space admits an orbit expansive homeomorphism then it is metrizable. In Theorem \ref{t2}, we  show that on  compact uniform spaces,  orbit expansivity is equivalent with topological expansivity, hence if a compact uniform space admits a topologically expansive homeomorphism then it is metrizable.
\medskip
\noindent In Section 2 we discuss preliminaries regarding uniform spaces and expansive maps on metric space required for the content of the paper.  To the best of our knowledge the notion of expansivity for uniform spaces was first studied in \cite{pram} in the form of topological positive expansive maps. In Section 3 of this paper we define and study expansive homeomorphism on uniform spaces. Through examples it is justified that topological expansivity is weaker than metric expansivity. We further show that if a uniform space admits a topologically expansive homeomorphism then space is always a Hausdorff space. The notion of orbit expansivity was first introduced in \cite{art}. A characterization of orbit expansivity on compact uniform spaces is obtained in terms of topological expansivity. As a consequence of this we conclude that if there is a topologically expansive homeomorphism on a compact uniform space then the space is always metrizable. In the Section 4 of the paper we study positive expansivity on topological spaces. Through examples it is justified that positively topological expansivity is weaker than positively metric expansivity. We obtain a characterization of positively topological expansive maps in terms of finite open cover. Further, we show that if  there exists a continuous, one-to-one, positively topological expansive map on  a compact uniform space, then the space is finite. We also obtain a condition under which $T_1-$space admits a positively topological expansive map.

\bigskip
\section{Preliminaries}
\medskip
\noindent In this Section we discuss basics required for the content of the paper.

\medskip
\noindent
\subsection{Uniform Spaces}

\medskip
\noindent Uniform spaces were introduced by A. Weil \cite{Wei} as a generalization of metric spaces and topological groups. Recall, in a uniform space $X$, the closeness of a pair of points is not measured by a real number, like in a metric space, but by the fact that this pair of points belong or does not belong to certain subsets of the cartesian product, $X\times X$. These subsets are called the \emph{entourages} of the uniform structure.

\bigskip
\noindent Let $X$ be a non-empty set. A relation on $X$ is a subset of $X\times X$.  If $U$ is a relation, then the \textit{inverse} of $U$ is denoted by $U^{-1}$ and is a relation given by
$$ U^{-1}=\{(y, x): (x, y)\in U\}.$$ A relation $U$ is said to be \emph{symmetric} if $U=U^{-1}$. Note that $U\cap U^{-1}$ is always a symmetric set. If $U$ and $V$ are relations, then the \emph{composite of} $U$ and $V$ is denoted by $U\circ V$ and is given by
$$U\circ V = \left\{(x, z) \in X\times X : \exists \; y \in X \mbox{ such that } (x, y)\in V \; \& \; (y, z)\in U\right\}.$$

\medskip
\noindent The set, denoted by $\bigtriangleup_X$,  given by  $\bigtriangleup_X = \{(x, x) : x \in X\}$ is called the \emph{identity relation} or \emph{the diagonal of} $X$. For every subset $A$ of $X$ the set $U[A]$ is a subset of $X$ and is given by  $U[A]=\{y \in X : (x, y)\in U,  \mbox{ for  some} \; x\in A\}$. In case if $A=\{x\}$ then we denote it by $U[x]$ instead of $U[\{x\}]$. We now recall the definition of uniform space.

\medskip
\begin{definition}
A \emph{uniform structure (or uniformity)} on a set $X$ is a non--empty collection $\U$ of subsets of $X\times X$ satisfying the following properties:
\begin{enumerate}
  \item If $U\in\mathcal{U}$, then $\bigtriangleup_X \subset U$.
  \item If $U\in\mathcal{U}$, then $U^{-1}\in\mathcal{U}$.
  \item If $U\in \mathcal{U}$, then $VoV\subseteq U$, for some $V\in\mathcal{U}$.
  \item If $U$ and $V$ are elements of $\mathcal{U}$, then $U\cap V\in\mathcal{U}$.
  \item If $U\in\mathcal{U}$ and $U\subseteq V\subseteq X\times X$, then $V\in \mathcal{U}$.
\end{enumerate}
The pair $(X, \mathcal{U})$ (or simply $X$) is called as a \emph{uniform space}.
\end{definition}

\medskip
\noindent Obviously every metric on a set $X$ induces a uniform structure on $X$ and  every uniform structure on a set $X$ defines a topology on $X$. Further, if the uniform structure comes from a metric, the associated topology coincides with the topology obtained by the metric. Also, there may be several different uniformities  on a set $X$. For instance,  the largest uniformity on $X$ is the collection of all subsets of $X\times X$ which contains $\bigtriangleup_X$ whereas the smallest uniformity on $X$ contains only $X\times X$. For more details on uniform spaces one can refer to \cite{kelly}.

\medskip
\begin{example}\label{compa}
  Consider $\R$ with usual metric $d$. For every $\epsilon>0$, let
$$U^d_\epsilon:=\left\{(x, y)\in \R^2 :  d(x, y)<\epsilon\right\}$$
Then the collection
$$  \U_d=\left\{E\subseteq \R^2 : U_\epsilon^d\subseteq E, \text{ \ for some  } \epsilon>0\right\}$$
is a uniformity on $\R$.  Further, let $\rho$ be an another metric on $\R$ given by $\rho(x,y) = | e^x -e^y |, \;\; x, y \in \R$. If for $\epsilon >0$,
$$U_\epsilon^\rho:=\left\{(x, y)\in \R^2 : \rho(x, y)<\epsilon\right\}$$
then the collection
$$  \U_{\rho}=\left\{E\subseteq \R^2 : U_\epsilon^\rho \subseteq E \text{ \  for some  } \epsilon>0\right\}$$
is also a uniformity on $\R$. Note that these two uniformities  are distinct as the set $\{ (x,y) : | x - y | < 1 \}$ is in $\U_d$  but it is not in $\U_{\rho}.$
\end{example}

\medskip
\noindent Let $X$ be a uniform space with uniformity $\U$. Then, the natural topology, $\tau_{\U}$, on $X$  is the family of all subsets $T$ of $X$ such that for every $x$ in $T$, there is $U\in\mathcal{U}$ for which $U[x]\subseteq T$.  Therefore, for each $U \in \U$,  $U[x]$ is a neighborhood of $x$. Further, the interior of a subset $A$ of $X$ consists of all those points $y$ of $X$ such that $U[y]\subseteq A$, for some $U\in\mathcal{U}$. For the proof of this, one can refer to  \cite[Theorem 4, P. 178]{kelly}. With the product topology on $X\times X$, it follows that every member of $\mathcal{U}$ is a neighborhood of $\Delta_X$ in $X\times X$. However, converse need not be true in general. For instance, in Example \ref{compa} every element of $\U_d$ is a neighborhood of $\Delta_\mathbb{R}$ in $\R^2$ but $\left\{(x, y) : |x-y|<\frac{1}{1+|y|}\right\}$ is a neighborhood of $\Delta_\mathbb{R}$ but not a member of $\U_d$.
Also, it is known that if $X$ is a compact uniform space, then $\U$ consists of all the neighborhoods of the diagonal $\Delta_X$ \cite{kelly}. Therefore for compact Hausdorff spaces the topology generated by different uniformities is unique and hence the only uniformity on $X$ in this case is the natural uniformity.  Proof of the following Lemma can be found in \cite{kelly}.

\medskip
\begin{lemma}\label{has}
  Let $X$ be a uniform space with uniformity $\U$. Then the following are equivalent:
  \begin{enumerate}
    \item $X$ is a $T_1-$space.
    \item $X$ is a Hausdorff space.
    \item $\bigcap\{U : U\in \U\}= \Delta_X$.
    \item $X$ is a regular space.
  \end{enumerate}

\end{lemma}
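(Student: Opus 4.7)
The strategy is to establish the equivalences via a cycle, using the defining properties of a uniformity---especially the existence of symmetric $V \in \U$ with $V \circ V \subseteq U$ for any given $U \in \U$---to produce separating entourages. Since $(2) \Rightarrow (1)$ is immediate (Hausdorff always implies $T_1$) and $(4) \Rightarrow (2)$ will hold once we note that regularity in a uniform-space context is understood to include the $T_1$ axiom, the substantive work is to show $(1) \Rightarrow (3) \Rightarrow (2)$ and $(3) \Rightarrow (4)$.

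For $(1) \Rightarrow (3)$, the inclusion $\Delta_X \subseteq \bigcap \U$ is automatic from axiom (1) of a uniformity. For the reverse inclusion, I will take $x \neq y$ and use that $\{y\}$ is closed in a $T_1$ space, so that $X \setminus \{y\}$ is an open neighborhood of $x$; by the definition of $\tau_\U$ there exists $U \in \U$ with $U[x] \subseteq X \setminus \{y\}$, hence $(x,y) \notin U$, hence $(x,y) \notin \bigcap \U$.

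For $(3) \Rightarrow (2)$, given $x \neq y$ there is $U \in \U$ with $(x,y) \notin U$. Choose a symmetric $V \in \U$ with $V \circ V \subseteq U$ (intersecting with $V^{-1}$ if necessary). I will show that $V[x]$ and $V[y]$ are disjoint neighborhoods: a common point $z$ would give $(x,z), (y,z) \in V$, so by symmetry $(z,y) \in V$ and thus $(x,y) \in V \circ V \subseteq U$, a contradiction. This gives Hausdorffness.

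For $(3) \Rightarrow (4)$, the key technical fact is that closed neighborhoods of a point form a basis. Given $x$ in an open set $W$, pick $U \in \U$ with $U[x] \subseteq W$ and a symmetric $V$ with $V \circ V \subseteq U$. I will verify that $\overline{V[x]} \subseteq (V \circ V)[x] \subseteq U[x] \subseteq W$; indeed, if $y \in \overline{V[x]}$ then every $W'[y]$ with $W' \in \U$ meets $V[x]$, so taking $W' = V$ yields $z$ with $(x,z) \in V$ and $(y,z) \in V$, whence $(x,y) \in V \circ V$ by symmetry. Taking $G$ to be an open neighborhood of $x$ inside $V[x]$ then separates $x$ from any closed set $F$ disjoint from $W$ via the disjoint open sets $G$ and $X \setminus \overline{V[x]}$. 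The only delicate step to be careful about is the closure computation, which is where the symmetry of $V$ and the composition axiom are both used; the rest of the argument is routine bookkeeping with entourages.
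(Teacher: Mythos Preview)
The paper does not give its own proof of this lemma; it simply records the statement and refers the reader to Kelley's \emph{General Topology} with the sentence ``Proof of the following Lemma can be found in \cite{kelly}.'' So there is nothing in the paper to compare against beyond the citation.

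Your argument is correct and is essentially the standard textbook proof one would find in Kelley. The implications $(1)\Rightarrow(3)$, $(3)\Rightarrow(2)$, and the closure computation $\overline{V[x]}\subseteq (V\circ V)[x]$ for $(3)\Rightarrow(4)$ are all handled properly, and you correctly use the symmetric refinement $V$ with $V\circ V\subseteq U$ at each step. One small point worth making explicit: the equivalence as stated forces the convention that ``regular'' includes a separation axiom (otherwise the indiscrete uniformity would already be regular in the weak sense and the equivalence would fail). You note this in passing for $(4)\Rightarrow(2)$, but it would be cleaner to state the convention up front rather than appeal to ``the uniform-space context.'' With that caveat, your proof is complete and more informative than the paper's bare citation.
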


%By \cite[Theorem 7, P.179]{kelly}, closure, relative to the uniform topology of a subset $M$ of $X\times X$ is $\cap \{UoMoU: U\in \mathcal{U}\}$. It implies that if $U\in \mathcal{U}$ and $V$ is a member of $\mathcal{U}$ such that $VoVoV\subseteq U$, then $\overline{V}\subseteq U$, thus the family of closed members of a uniformity $\mathcal{U}$ is a basis of $\mathcal{U}$.\\

\medskip
\subsection{Various kind of Expansivity on Metric / Topological Spaces}

\noindent Let $X$ be a metric space with metric $d$ and let $f : X \longrightarrow X$ be a homeomorphism. For $x\in X$ and a positive real number $c$, set
$$\Gamma_c(x, f)=\left\{y : d(f^{n}(x), f^{n}(y))\leq c, \forall n\in \Z\right\}.$$
$\gc$ is known as \emph{the dynamical ball of $x$ of size $c$}. Note that for each $c$, $\gc$ is always non--empty. We recall the definition expansive homeomorphism defined by Utz \cite{U}.

\medskip
\begin{definition}
Let $X$ be a metric space with metric $d$ and let $f : X \longrightarrow X$ be a homeomorphism. Then $f$ is said to be a \emph{metric expansive homeomorphism}, if there exists $c>0$ such that $\gc=\{x\}$, for all $x\in X$. Constant $c$ is known as an \textit{expansive constant} for $f$.

\end{definition}

\medskip
\noindent The notion of metric expansive homeomorphism is independent of the choice of metric if the space is compact but not the expansive constant. If the space is non--compact then the notion of metric expansivity depends on the choice of metric even if the topology induced by different metrics are equivalent. For instance, see Example \ref{ex1}. Different variants and generalizations of expansivity are studied. We study few of them in this section.

\medskip
% Let $n\in \N$. Then a homeomorphism $f$ is said to be \emph{$n-$expansive homeomorphism}, if there exists $c>0$ such that $\gc$ has at most $n-$elements, for all $x\in X$ \cite{CM}.
%Homeomorphism $f$ is said to be a \emph{metric  finite expansive homeomorphism} if there exists $c>0$ such that  $\gc$ is a finite set for every $x\in X$ \cite{ali}.
 %Homeomorphism $f$ is said to be a \emph{countably expansive homeomorphism}, if there exists $c>0$ such that $\gc$ is a countable set, for all $x\in X$ \cite{CM1}.

%\noindent In each of the above case constant $c$ is known as \emph{expansive constant for} $f$.

%\smallskip
%\noindent Following is an obvious diagram.
 %$$\text{Metric Expansive}\ \Longrightarrow n-\text{Expansive}\Longrightarrow \text{Finite Expansive} \Longrightarrow \text{Countable Expansive}$$

%\smallskip
%\noindent In general, converse of each of the above implications is not true. For instance, in \cite[Theorem 5.3.1]{ar3}, it is shown  that the $2$-expansivity does not imply the expansivity and therefore in general $n-$expansivity need not imply expansivity. Examples 3.1 and 3.2 in \cite{ali} shows that finite expansivity  need not imply $n-$expansivity in general.  In  \cite{ar4}, Artigue constructed a homeomorphism on the surface with genus two which is countable expansive  but not finite expansiveness.

\medskip
\noindent Let $(X, \tau)$ be a topological space. For a subset $A\subseteq X$ and a cover $\mathcal{U}$ of $X$ we write $A\prec \mathcal{U}$ if there exists $C\in \mathcal{U}$ such that $A\subseteq C$. If $\mathcal{V}$ is a family of subsets of $X$, then $\mathcal{V}\prec \mathcal{U}$ means that $A\prec \mathcal{U}$ for all $A\in\mathcal{V}$. If, in addition $\mathcal{V}$ is a cover of $X$, then $\mathcal{V}$ is said to be \emph{refinement of} $\mathcal{U}$. Join of two covers $\mathcal{U}$ and $\mathcal{V}$ is a cover given by $\mathcal{U}\wedge \mathcal{V}=\{U\cap V| U\in\mathcal{U}, V\in\mathcal{V}\}$. Every open cover $\mathcal{U}$ of cardinality $k$ can be refined by an open cover $\mathcal{V}=\bigwedge_{i=1}^k \mathcal{U}$ such that $\mathcal{V}\prec \mathcal{U}$ and $\mathcal{V}\bigwedge\mathcal{V}=\mathcal{V}$.

\medskip
\noindent Let $X$ be an infinite $T_1-space$ and $\mathcal{U}$ be a finite open cover of $X$. Then there is open cover $\mathcal{V}$ such that $\mathcal{U} \nprec \mathcal{V}$. Therefore, if $f : X \longrightarrow X$ is a homeomorphism defined on $X$, then for every finite open cover $\mathcal{U}$ of $X$, there is an open cover $\mathcal{V}$ such that  $f^{-n}(\mathcal{U})\nprec \mathcal{V}$, for all $n\in\mathbb{N}$. Because if it is not true, then there is a finite open cover $\mathcal{U}$ of cardinality $k$ such that for  every   open cover $\mathcal{V}$ there is $n\in\mathbb{N}$ with $f^{-n}(\mathcal{U})\prec \mathcal{V}$. Take $k+1$ distinct point $\{x_n\}_{n=0}^k$ and $\mathcal{V}=\{ X-\{x_0, x_1, \ldots, \widehat{x_i}, \ldots, x_k\} : i=0, \ldots, k\}$ where "hat" means that the element must be omitted. Hence there is $n\in\mathbb{N}$ such that $f^{-n}(\mathcal{U})\prec \mathcal{V}$. That is a contradiction since $\mathcal{V}$ has no proper subcover.

\medskip
\begin{proposition}\label{finite}
Let $X$ be a $T_1$-space and $f:X\to X$ be a homeomorphism. The space $X$ is finite if and only if there is finite open cover $\mathcal{U}$ such that for every open cover $\mathcal{V}$ there is $n\in\mathbb{N}$ such that $f^{-n}(\mathcal{U})\prec \mathcal{V}$.
\end{proposition}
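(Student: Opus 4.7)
My plan is to prove the two directions separately, leveraging the discussion immediately preceding the proposition for the harder implication.

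For the forward direction, I would first note that a finite $T_1$-space is discrete: in a finite $T_1$-space, each singleton $\{x\}$ is closed, so $X\setminus\{x\}$ is a finite union of closed singletons and hence closed, making $\{x\}$ open. Then I take $\mathcal{U}=\{\{x\}:x\in X\}$, which is a finite open cover. Since $f$ is a bijection, $f^{-n}(\mathcal{U})=\{\{f^{-n}(x)\}:x\in X\}=\mathcal{U}$ for every $n$. For any open cover $\mathcal{V}$ of $X$, each singleton $\{y\}$ is contained in some $V\in\mathcal{V}$ because $\mathcal{V}$ covers $X$. Hence $f^{-n}(\mathcal{U})\prec\mathcal{V}$ for every $n\in\mathbb{N}$, which is much more than what is needed.

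For the converse, I would argue by contrapositive, essentially formalising the paragraph that precedes the proposition statement. Assume $X$ is infinite and $T_1$, and let $\mathcal{U}$ be any finite open cover, say of cardinality $k$. Choose $k+1$ distinct points $x_0,x_1,\ldots,x_k\in X$. Since singletons are closed in a $T_1$-space, the sets
\[
V_i \;=\; X\setminus\{x_0,\ldots,\widehat{x_i},\ldots,x_k\},\qquad i=0,1,\ldots,k,
\]
are open, and they form a cover $\mathcal{V}$ because $x_i\in V_i$. The key observation is that $\mathcal{V}$ has no proper subcover: for each $i$, the point $x_i$ lies only in $V_i$, since for $j\neq i$ the set $V_j$ excludes $x_i$.

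Now suppose, for contradiction, that some $n\in\mathbb{N}$ satisfies $f^{-n}(\mathcal{U})\prec\mathcal{V}$. Since $|f^{-n}(\mathcal{U})|=k$, each of its $k$ members is contained in some $V_i$, and the collection of these (at most $k$) $V_i$'s must still cover $X$, because $f^{-n}(\mathcal{U})$ covers $X$. This yields a subcover of $\mathcal{V}$ of size at most $k<k+1=|\mathcal{V}|$, contradicting the fact that $\mathcal{V}$ has no proper subcover. Hence for this choice of $\mathcal{U}$ no such $n$ exists, and since $\mathcal{U}$ was an arbitrary finite open cover, the condition in the proposition fails for every finite open cover. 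This completes the contrapositive.

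The only potential snag is verifying that $\mathcal{V}$ really admits no proper subcover, but this is immediate from the construction since each $x_i$ is covered uniquely by $V_i$. Once that combinatorial fact is in place, the pigeonhole-style cardinality comparison between $f^{-n}(\mathcal{U})$ and $\mathcal{V}$ delivers the contradiction. So I do not anticipate a serious obstacle; the proof is essentially a tightening of the argument the authors have already sketched.
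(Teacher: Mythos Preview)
Your proof is correct and follows essentially the same approach as the paper: the forward direction uses the discrete cover by singletons (which the paper also notes later), and the converse is exactly the authors' argument from the paragraph preceding the proposition, with the pigeonhole step made explicit. There is nothing to add.
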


\medskip
\noindent Being $T_1-$space  is essential in Proposition \ref{finite}.  For instance, consider $X=[0,1)$ with the topology $\tau_X=\{(a, 1): a\geq 0\}\cup \{X, \emptyset\}$. It is clear that $(X, \tau_X)$ is  compact, not $T_1$. Take open cover $\mathcal{U}= \{X\}$. It is clear that if $\mathcal{V}$ is an open cover, then $X\in \mathcal{V}$, hence  $\mathcal{U}\prec \mathcal{V}$.

\medskip
\noindent The notion for orbit expansivity for homeomorphisms was first defined in \cite{art}. We recall the definition.

\medskip
 \begin{definition}
 Let  $f: X \longrightarrow X$ be a homeomorphism defined on a topological space $X$. Then $f$ is said to be \emph{an orbit  expansive homeomorphism} if there is a finite open cover $\mathcal{U}$ of $X$ such that if  for each $n \in \Z$, the set $\{f^n(x), f^n(y)\}\prec \mathcal{U}$, then $x=y$. The cover $\mathcal{U}$ of $X$  is called \emph{an orbit  expansive covering of} $f$.
 \end{definition}

\medskip
\noindent It can be observed that if $f$ is an orbit expansive homeomorphism on a compact metric space and $\mathcal{U}$ is an orbit expansive covering of $f$, then $\U$ is a generator for $f$ and therefore $f$ is an expansive homeomorphism. Conversely, every expansive homeomorphism on a compact metric space has a generator $\U$, which is also an orbit expansive covering of $f$. Hence on compact metric space expansivity is equivalent to orbit expansivity. Another generalization of expansivity was defined and studied in \cite{T.D}. We recall the definition.

\medskip
\begin{definition}
Let $X$ be a topological space. Then a homeomorphism $f : X\longrightarrow X$ is said to be an \emph{expansive homeomorphism} if there exists a  neighborhood $N$ of $\Delta_X$ such that for any two distinct $x, y\in X$, there is $n\in\mathbb{Z}$  satisfying $(f^n(x), f^n(y))\notin N$. Closed Neighbourhood $N$ is called \emph{an expansive neighborhood} for $f$.
\end{definition}

\medskip
\noindent Note that the term used in \cite{T.D} is topologically expansive but we used the term expansive in above definition to differentiate it from our definition of expansivity on uniform spaces.Obviously, metric expansivity implies expansivity. Through examples it was justified in \cite{T.D} that in general expansivity need not imply metric expansivity. Also, similar to proof of \cite[Theorem 4]{D.R}, one can show that on a locally compact metric space $X$, if $f:X \longrightarrow X$ is expansive with expansive neighborhood $N$, then  for every $\epsilon>0$ we can construct a metric $d$ compatible with the topology of $X$ such that $f$ is a metric expansive with expansive constant $\epsilon>0$.

\medskip
\noindent One of the early variation of metric expansivity was studied by Schwartzman in \cite{SS}, namely positively metric expansive. We recall the definition.

\medskip
\begin{definition}
Let $X$ be a metric space with metric $d$ and let $f : X \longrightarrow X$ be a continuous map. Then $f$ is said to be a \emph{positively metric expansive map}, if there exists $c>0$ such that for $x, y\in X$ with $x \neq y$ there is a non--negative integer $n$ satisfying $d(f^n(x), f^n(y)) >c$. Constant $c$ is known as an \textit{expansive constant for $f$}.
\end{definition}

\medskip
\noindent An example positively metric expansive map is  a doubling map on the unit circle. But it is known that there exists no metric expansive homeomorphism on the unit circle \cite{an}. Also, the shift map of the full two--sided shift is a metric expansive homeomorphism which is not positively metric expansive map. Hence in general, metric expansivity and positive metric expansivity are not related.

\bigskip
\noindent Note that it may be happen that for a positively metric expansive map $f:X \longrightarrow X$  it may happen to two distinct points are separated only by the zeroth iterated of the map $f$ but not separated by any other iterate. For instance, consider $X=\{x\in\Bbb R:|x|\ge1\}$ with the usual metric and defined $f : X \longrightarrow X$ by  $f(x)=2|x|$ for $x\in X$. Then $f$ is positively metric expansive with expansive constant $\delta$ such that $0< \delta < 1$. Now, let $x,y\in X$ be such that $x\ne y$ but $|x|=|y|$.  Then $d(x,y)>1$. However, $d(f^n(x),f^n(y))=0$ for all $n\ge 1$. Observe that $f$ is not a one--one map.

\bigskip
\noindent Suppose $f$ is a positively metric expansive one--one map defined on a metric space $X$ with expansive constant $c >0$. Let $x\neq y$. Then there exists $n>0$ such that $d(f^n(x),f^n(y))>c$.  Setting $x'=f^n(x)$ and $y'=f^n(y)$, there  exists some $n'>0$ such that $d(f^{n'}(x'),f^{n'}(y'))>c$.  But $f^{n'}(x')=f^{n+n'}(x)$ and $f^{n'}(y')=f^{n+n'}(y)$.  Thus if $n^{th}$ iterated of $f$ separates $x$ and $y$, then we can even find a larger integer $n+n'$ which also separates $x$ and $y$. Hence there are infinitely many positive integers $n$ which separates $x$ and $y$. But this is not true with if $f$ is a metric expansive homeomorphism on compact metric space $X$. For example, take $X=\{0\}\cup\{1/n:n\in\N\}$ with usual metric, and let $g:\mathbb{Z}\longrightarrow \{1/n:n\in\N\}$ be a bijection such that $g(0)=1$.  Define $f:X\longrightarrow X$ by $f(0)=0$ and $f(g(n))=g(n-1)$ for each $n\in\mathbb{Z}$.  Then $f$ is metric expansive with expansive constant $\dfrac{1}{3}$. For any $x\neq y$ in $X$, suppose  that $x\neq 0$ and therefore $x=g(n)$,  for some $n\in\mathbb{Z}$. Further,  $f^{n}(x)=1$ and $f^{n}(y)\leq 1/2$ so $d(f^{n}(x),f^{n}(y))> 1/3$.  Note that there are only finitely many values of $n$ for which $d(f^n(x),f^n(y))>1/3$ as this is possible only if at least one  of $f^n(x)$ and $f^n(y)$ is either $1$ or $1/2$. But  there is at most one value of $n$ for which $f^n(x)=1$ and at most one value of $n$ such that $f^n(x)=1/2$ and similarly for $y$.

\bigskip
\noindent In fact, it is know that if there exists a continuous, one-to-one, positively metric expansive map a compact metric space, then the space is always finite. There are couple of proofs known for this result. For instance, see \cite{an,ck}. In the following, we give  another proof of it.

\medskip
\begin{proposition}
Let $X$ be a compact metric space and $f:X \longrightarrow X$ be a continuous, one-to-one, positively metric expansive map. Then $X$ is a finite set.
\end{proposition}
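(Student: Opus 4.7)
The plan is to derive a contradiction from the assumption that $X$ is infinite by constructing two distinct points inside the $f$-invariant compact core $\Lambda := \bigcap_{n \geq 0} f^n(X)$ whose backward orbits stay within distance $c$ while $f$ separates them further, and then combining this with finiteness of $\Lambda$ (the classical finiteness theorem for positively expansive homeomorphisms of compact metric spaces) to produce a numerical contradiction via periodicity.

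Suppose for contradiction that $X$ is infinite. Compactness gives a non-isolated point $x^\star \in X$ and a sequence of distinct $x_n \to x^\star$ with $x_n \neq x^\star$. By positive expansivity, for each $n$ there is a smallest $m_n \geq 0$ with $d(f^{m_n}(x_n), f^{m_n}(x^\star)) > c$; continuity of each iterate $f^j$ together with $x_n \to x^\star$ forces $m_n \to \infty$. Set $u_n := f^{m_n - 1}(x_n)$ and $v_n := f^{m_n - 1}(x^\star)$; minimality of $m_n$ yields $d(u_n, v_n) \leq c$ and $d(f(u_n), f(v_n)) > c$. Passing to a subsequence with $u_n \to u^\infty$ and $v_n \to v^\infty$, I obtain $d(u^\infty, v^\infty) \leq c$ and $d(f(u^\infty), f(v^\infty)) \geq c$, so in particular $u^\infty \neq v^\infty$. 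By injectivity, $f^{-k}(u_n) = f^{m_n - 1 - k}(x_n)$ is well-defined along the orbit for $k \leq m_n - 1$, and minimality gives $d(f^{-k}(u_n), f^{-k}(v_n)) \leq c$. Since $u_n \in f^{m_n - 1}(X)$ with $m_n \to \infty$, the limits $u^\infty, v^\infty$ lie in $\Lambda$; injectivity yields $f(\Lambda) = \Lambda$, so $f|_\Lambda$ is a continuous bijection of a compact metric space, hence a homeomorphism. Continuity of $f^{-k}|_\Lambda$ then upgrades the bound to
\[
d\bigl(f^{-k}(u^\infty), f^{-k}(v^\infty)\bigr) \leq c \qquad \text{for every } k \geq 0.
\]

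Next I invoke the classical result (cited in the paper as \cite{an,ck}) that a positively expansive homeomorphism of a compact metric space has finite underlying set, applied to $f|_\Lambda$, to conclude $\Lambda$ is finite. Then $f|_\Lambda$ is a permutation of $\Lambda$ and the pair $(u^\infty, v^\infty) \in \Lambda \times \Lambda$ is periodic under $f|_\Lambda \times f|_\Lambda$ with some period $P \geq 1$. From $f^{-P}(u^\infty) = u^\infty$ I get $f^{-(P-1)}(u^\infty) = f(u^\infty)$ and similarly for $v^\infty$, so the backward-orbit bound at $k = P-1$ reads
\[
c \;\geq\; d\bigl(f^{-(P-1)}(u^\infty), f^{-(P-1)}(v^\infty)\bigr) \;=\; d\bigl(f(u^\infty), f(v^\infty)\bigr) \;>\; c,
\]
the desired contradiction.

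The main obstacle is the step invoking finiteness of $\Lambda$: the reduction to the invariant core, the sequence/compactness construction of the pair, and the final periodicity contradiction are essentially bookkeeping once the ``infinitely many separations'' observation recorded just before the proposition is in hand; but the input that a positively expansive homeomorphism on a compact metric space has finite underlying space is the genuine dynamical content, classical but non-trivial, and is really what drives the result.
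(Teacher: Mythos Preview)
Your argument is sound in outline but takes a genuinely different route from the paper's. The paper's proof is self-contained: using its Proposition~\ref{finite} (the finite-cover criterion on $T_1$-spaces) together with a Lebesgue-number argument, it directly manufactures, from the assumption that $X$ is infinite, two distinct points whose entire \emph{forward} orbits stay within $c$, immediately contradicting positive expansivity. You instead pass to the invariant core $\Lambda=\bigcap_{n\ge0}f^n(X)$, build a distinct pair $u^\infty\neq v^\infty$ there whose \emph{backward} orbits under the homeomorphism $f|_\Lambda$ stay within $c$, and then invoke the classical homeomorphism case from \cite{an,ck} to force $\Lambda$ finite, hence the pair periodic, hence a contradiction. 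Your reduction ``injective map $\leadsto$ homeomorphism on the core'' is clean, but, as you yourself flag, it imports rather than reproves the hard dynamical content; the paper's stated purpose is precisely to supply a new self-contained argument, so in that sense your proof does not meet the paper's goal even though it establishes the proposition.

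Two small repairs are needed. First, passing to the limit in $d(f(u_n),f(v_n))>c$ yields only $d(f(u^\infty),f(v^\infty))\ge c$, so your final chain $c\ge\cdots>c$ does not close as written; the fix is immediate: once the pair has common period $P$ in $\Lambda$, the backward bound $d(f^{-k}(u^\infty),f^{-k}(v^\infty))\le c$ for all $k\ge0$ becomes $d(f^{j}(u^\infty),f^{j}(v^\infty))\le c$ for all $j\ge0$ (write $j=NP-k$), and positive expansivity applied to $u^\infty\neq v^\infty$ gives the contradiction directly. Second, ``continuity of $f^{-k}|_\Lambda$'' is not quite the right justification, since $u_n,v_n\notin\Lambda$ in general; what you actually use is that $f^k:X\to f^k(X)$ is a homeomorphism (a continuous bijection from a compact metric space onto its image), so $f^{-k}$ is continuous on $f^k(X)$, which contains $u_n,v_n$ for large $n$ as well as $u^\infty,v^\infty$.
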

\begin{proof}
Let $X$ be an infinite compact metric space and $f$ be a continuous, one-to-one, positively metric expansive map with expansive constant $c>0$. One can check that there is $m\in\mathbb{N}$ such that for every $k\geq0$ if $d(f^i(x), f^i(y))\leq c$ for $i=k+1, k+2, \ldots, k+n$, then $d(f^i(x), f^i(y))\leq c$ for all $0\leq i\leq k+n$. Since $X$ is an infinite $T_1-$space,  it follows  by Proposition \ref{finite} for every finite open cover $\mathcal{U}$, there is an open cover $\mathcal{V}$ such that for every $n\in\mathbb{N}$, $f^{-n}(\mathcal{U})\nprec \mathcal{V}$. Take finite open cover $\mathcal{U}$ containing open sets of the form
\begin{equation*}
U(z)=\left\{x\in X:d(f^i(x), f^i(y))\leq \frac{c}{2} \text{ \ for \ } i=1, 2, \ldots, m \right\}
\end{equation*}
Then there is open cover $\mathcal{V}$ such that for every $n\in\mathbb{N}$, $f^{-n}(\mathcal{U})\nprec \mathcal{V}$. Let $\delta>0$ be a Lebesgue number of open cover $\mathcal{V}$. Then for every $k\in\mathbb{N}$ there exist $x_k, y_k\in f^{-k}(\mathcal{U})$ such that $d(x_k, y_k)>\delta$. Consider sequences $\{x_k\}$ and $\{y_k\}$ is a compact metric space $X$. Suppose $\{x_k\}$ converges to $x$ and $\{y_k\}$ converges to $y$. Since $d(x_k, y_k)>\delta$ it follows that $x\neq y$. Also $x_k, y_k\in f^{-k}(\mathcal{U})$ implies that $f^k(x_k)$ and $f^k(y_k)$ lies in the same set $U(z)$. Thus $d(f^i(x_k), f^i(y_k))\leq c$ for $i=k+1, k+2, \ldots, k+m$. Since $d(f^i(x_k), f^i(y_k))\leq c$ for $i=0, 1, \ldots, k+m$, we have  $d(f^i(x), f^i(y))\leq c$ for all $i \geq 0$ while $x\neq y$. This contradicts positive metric expansivity of $f$.
\end{proof}

\medskip

%In 2006, the authors of  \cite{D.R} studied two topological generalizations of positive expansivity . For a given dynamical system $f:X\rightarrow X$, they used the product map $F=f\times f$ on $X\times X$ and neighborhoods of the diagonal $\Delta_X = \{(x, x): x\in X\}$ to extend the notion of positive expansivity. In 2013, the authors of \cite{T.D} introduced topological expansivity. Indeed  $f:(X, \mathcal{U})\to (X, \mathcal{U})$ is called  topologically expansive homeomorphism, if there is a closed member $D\in\mathcal{U}$ such that $\Gamma_D\{x\}=\{x\}$ for all $x\in X$ where $\Gamma_D\{x\}=\{y:(f^n(y), f^{n}(x))\in D, \forall n\in\mathbb{Z}\}$. In next section, we give some further properties of topologically expansive homeomorphism and in Subsection, we give topological definition of finite expansive homeomorphism on an uniform space.

\section{Topologically expansive homeomorphism}
\bigskip
\noindent In this section we study expansivity on uniform spaces. The notion was first defined in \cite{pram}. Let $X$ be an uniform space with uniformity $\U$ and $f:X \longrightarrow X$ be a homeomorphism. For an entourage $D \in \U$ let
$$\Gamma_D(x, f)=\left\{y  \: : (f^{n}(x), f^{n}(y))\in D, \: \forall n\in\mathbb{Z}\right\}.$$

\medskip
\begin{definition}\label{deft}
Let $X$ be an uniform space with uniformity $\U$. A homeomorphism $f:X \longrightarrow X$ is said to be a \emph{topologically   expansive homeomorphism}, if there exists an entourage $A\in\mathcal{U}$, such that for every $x\in X$,
 $$\Gamma_A(x, f)=\{x\}$$
Entourage $A$ is called  \emph{an expansive entourage}.
\end{definition}

\medskip
\noindent Note that if $B \in \U$ is a closed entourage such that $B \subset A$, then $B$ is also an expansive entourage of $f$.  Since every  entourage $A\in \mathcal{U}$ is also a neighborhood of $\Delta_X$, it follows that every topologically expansive  homeomorphism is an expansive homeomorphism. But in general converse need not be true can be observed from the following Example:

\medskip
\begin{example}
Consider $\R$ with the uniformity $\U$ generated by usual metric on $\R$. Then the translation  $T$ defined on $\R$ by $T(x)= x+1$ is an expansive homeomorphism with an expansive neighbourhood $N= \{(x, y)\in\mathbb{R}^2: |x-y|\leq e^{-x}\}$. Note that $N \notin \U$. In fact, it is easy to  observe that $T$ is not topologically expansive.
\end{example}

\smallskip
\begin{example}\label{ex1}
Consider $\R$ with uniformities $\U_\rho$ and $\U_d$ as given in Example \ref{compa}. Define a homeomorphism  $f:\R \longrightarrow \R$ by $f(x)= x+ln2$. Then it can be easily verified that $f$ is topologically expansive for a closed entourage $A \in \U_\rho$ but not for any closed  entourage $D \in \U_d$. Further, observe that $f$ is metric expansivity with respect to metric $\rho$ but is not metric expansive with respect to metric $d$.
\end{example}

\medskip
\noindent From Example \ref{ex1} it can be concluded that the notion of topological expansivity depends on the choice of uniformity on the space and the notion of metric expansivity depends on the metric of the space. In the following Remark we observe certain results related to topological expansivity as a consequence of expansivity.

%\medskip
%\begin{proposition}
%Let $f:X\to X$ be a homeomorphism on a locally compact metrizable space. Then $f$ is topologically expansive if and only if $f$ is metric expansive with respect to some metric compatible with the topology.
%\end{proposition}
%\begin{proof}
%First, suppose that $f$ is metric expansive with respect to the metric $d$, with expansive constant $\delta$. Then the set $N=\{(x, y): d(x, y)\leq \delta\}$ is an expansivity neighborhood, so $f$ is topologically expansive.\\
%Now let $f$ be topologically expansive with expansive neighborhood $N$. Pick any $\epsilon>0$. Similar to proof of \cite[Theorem 4]{D.R}, We can create a
%metric $d$ on $X$ compatible with the topology such that for$(x, y)\notin N$, $d(x,y)> \epsilon$. Then, since for every distinct pair
%of points $x,y\in X$, there is an $n\in\mathbb{Z}$  such that$(f^n(x), f^n(y))\notin N$, we can conclude that $f$ metric expansive  with respect to $d$,
%with expansive constant $\epsilon$.
%\end{proof}

\medskip
\begin{remark}
Let $X$ be a uniform space with uniformity $\U$ and let $f : X \longrightarrow X$ be a homeomorphism.
\begin{enumerate}
  \item Suppose $X$ is a locally compact, paracompact uniform space. Since every topologically expansive homeomorphism is an expansive homeomorphism, it follows from Lemma 9 of \cite{T.D} that there is a proper expansive neighborhood for $f$. Note that this neighborhood need not be an entourage. Recall, a set $M\subseteq X\times X$ is proper if for every compact subset $A$ of $X$, the set $M[A]$ is compact.
  \item Let $f$ be topologically expansive homeomorphism. Then by Proposition 13 of \cite{T.D} it follows that for each $n \in \N$,  $f^n$ is expansive. Note that this $f^n$ need not be general topologically expansive. For instance, let $\U$ be the usual uniformity on $[0, \infty)$ and $f:[0, \infty)\longrightarrow [0, \infty)$ be as homeomorphism constructed by Bryant and Coleman in \cite{b.f}. Then it is easy to verify that $f$ is topologically expansive but $f^n$ is not topologically expansive, for any $n>1$.
  \item Let $X$ be a uniform space with uniformity $\U$ and $Y$ be a uniform space with uniformity $\mathcal{V}$. Suppose $f : X \longrightarrow X$ is topologically expansive and $h : X \longrightarrow Y$ is a homeomorphism. Then by Proposition 13 of \cite{T.D} if follows that $h\circ f\circ h^{-1}$ is expansive on $Y$. However, the homeomorphism $h\circ f\circ h^{-1}$ need not be topologically expansive. For instance,  let $\U_\rho$ and $\U_d$ be uniformities on $\R$ as defined in Example \ref{compa}. Consider the identity homeomorphism $h : \R \longrightarrow \R$, where the domain $\R$ is considered with uniformity $\U_\rho$ whereas codomain is considered with the uniformity $\U_d$.  Then as observed in Example \ref{ex1} $f(x)=x + \ln (2)$ is topologically expansive with respect to $\U_\rho$ but $h\circ f\circ h^{-1}$ is not topologically expansive with respect to $\U_d$.
    \end{enumerate}
\end{remark}

\medskip
\noindent Observe here that in each of the above Example, $f$ is not uniformly continuous. In the following we show that Remarks above are true if the maps are uniformly continuous. Recall, a map $f :X \longrightarrow X$ is uniformly continuous relative to the uniformity $\U$ if for every entourage $V\in \mathcal{U}$, $(f\times f)^{-1}(V)\in \mathcal{U}$.

\medskip
\begin{proposition}\label{pp}
\begin{enumerate}
  \item Let $X$ be a uniform space with  uniformity $\U$. Suppose both $f$ and $f^{-1}$ are uniformly continuous relative to $\U$. Then $f$ is topologically expansive if and only if $f^n$ is topologically expansive, for all $n\in \Z\backslash \{0\}$.

  \item Let $X$ be a uniform space with uniformity $\U$ and $Y$ be a uniform space with uniformity $\mathcal{V}$. Suppose $h:X \longrightarrow Y$ is a homeomorphism such that both $h$ and $h^{-1}$ are uniformly continuous.  Then $f$ is topologically expansive on $X$ if and only if  $h\circ f\circ h^{-1}$ is topologically expansive on $Y$.
\end{enumerate}
\end{proposition}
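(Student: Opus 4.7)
The plan is to exhibit an appropriate expansive entourage explicitly in each case; uniform continuity enters exactly because it guarantees the candidate entourage belongs to the ambient uniformity. A useful preliminary observation is that for any self--homeomorphism $g$ the defining condition of $\Gamma_A(x, g)$ is invariant under $k \mapsto -k$, so $\Gamma_A(x, g) = \Gamma_A(x, g^{-1})$, and topological expansivity of $g$ and of $g^{-1}$ coincide.

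For part (1), the converse is essentially free: if $f^n$ is topologically expansive with entourage $B$ for some $n \in \Z \setminus \{0\}$, then $\Gamma_B(x, f) \subseteq \Gamma_B(x, f^n) = \{x\}$, since requiring the orbit condition for every $k \in \Z$ is stronger than requiring it only for $k \in n\Z$. For the forward direction, the symmetry above lets me reduce to $n \geq 1$. Given an expansive entourage $A$ for $f$, my candidate is
$$B = \bigcap_{i=0}^{n-1} (f^i \times f^i)^{-1}(A),$$
which lies in $\U$ by uniform continuity of $f$. Writing any integer $m$ as $m = nk + i$ with $0 \leq i \leq n-1$, the condition $(f^{nk}(x), f^{nk}(y)) \in B$ unpacks to $(f^m(x), f^m(y)) \in A$; so if it holds for all $k \in \Z$ then $y \in \Gamma_A(x, f) = \{x\}$, showing $B$ is expansive for $f^n$.

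For part (2), assume $f$ is topologically expansive on $X$ with entourage $A \in \U$, and set $g = h \circ f \circ h^{-1}$. I will take $B = (h^{-1} \times h^{-1})^{-1}(A)$; this lies in $\mathcal{V}$ by uniform continuity of $h^{-1}$. Since $g^k = h \circ f^k \circ h^{-1}$, the equivalence
$$(g^k(u), g^k(v)) \in B \iff (f^k(h^{-1}(u)), f^k(h^{-1}(v))) \in A$$
yields $\Gamma_B(u, g) = h(\Gamma_A(h^{-1}(u), f)) = \{u\}$, so $g$ is topologically expansive on $Y$. The reverse implication follows by swapping the roles of $X$ and $Y$ and using uniform continuity of $h$ in place of $h^{-1}$.

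There is no real obstacle here: the entire argument is the observation that uniform continuity is exactly the tool needed to pull an expansive entourage back through an iterate or through the conjugating homeomorphism. The examples in the preceding remark, where uniform continuity fails, explain why the hypothesis cannot be weakened: the natural pullback sets are still neighborhoods of the diagonal but need not be entourages of the prescribed uniformity.
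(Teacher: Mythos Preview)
Your proof is correct. The paper itself does not actually prove this proposition: it simply remarks that the argument is similar to that of Proposition~13 in \cite{T.D} and omits the details. Your proof is exactly the standard one that reference would lead to --- pulling back the expansive entourage through the iterate (respectively, through the conjugating homeomorphism) and invoking uniform continuity to ensure the pullback remains an entourage --- so your approach is essentially the intended one, now written out in full.
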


\medskip
\noindent Since the proof of the Proposition \ref{pp} is similar to the proof of Proposition 13 in \cite{T.D}, we omit the proof.  In spite of expansivity,  in the following Proposition we show that if a uniform space admits a topologically expansive homeomorphism, the space is always Hausdorff space.
%Let $A$ be an expansive neighborhood for $f:X\rightarrow X$ , and $Y\subseteq X$ is a subspace of
%$X$ with $f(Y)=Y$, then it is easy to see that $B=A\cap Y\times Y$ is expansive neighborhood for $f:Y\to Y$. Also if $A, B$ are expansive neighborhood of $f:X\to X$ and $g:Y\to Y$, respectively, then $C=h^{-1}(A\times B)$ is expansive neighborhood for $f\times g:X\times Y\rightarrow
%X\times Y$ in which $h:(X\times Y)^{2}\rightarrow
%X^{2}\times Y^{2}$ is defined by $h((x_1, y_1), (x_2, y_2))= (x_1,
%x_2, y_1, y_2)$.\\

%Hence if expansive neighborhood $N$ is closed set, then  $\bigcap_{n\in\mathbb{Z}}(f\times f)^{-n}(N)= \delta_X$ is closed and this implies that $X$ is Hausdorff space. But if $N$ is not closed, then it may be happen that $X$ is not Hausdorff space, see Example \ref{pnh}.
%
%
%\medskip
%\noindent In the following Proposition we show that if a uniform space admits a topological expansive homeomorphism then the pace always a Hausdorff space.

\medskip
\begin{proposition}\label{t1}
Let $X$ be a uniform space with uniformity $\U$ and let $f : X \longrightarrow X$ be a topologically expansive homeomorphism. Then $X$ is always a Hausdorff space.
\end{proposition}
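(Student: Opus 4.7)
The plan is to reduce the Hausdorff condition to the statement $\bigcap\{U:U\in\mathcal{U}\}=\Delta_X$, which by Lemma \ref{has} is equivalent to $X$ being Hausdorff, and then use the expansive entourage of $f$ together with continuity of iterates to force any pair in $\bigcap\mathcal{U}$ to coincide.

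Concretely, let $A\in\mathcal{U}$ be an expansive entourage for $f$. I would fix an arbitrary pair $(x,y)\in\bigcap\{U:U\in\mathcal{U}\}$ and aim to show $x=y$. The key observation is that if $(x,y)\in\bigcap\mathcal{U}$ then $y$ lies in every neighborhood of $x$ in the topology $\tau_{\mathcal{U}}$, because the sets $U[x]$ with $U\in\mathcal{U}$ form a neighborhood base at $x$. Hence $x$ and $y$ are topologically indistinguishable. I would then transport this indistinguishability through each iterate $f^{n}$, $n\in\mathbb{Z}$, using the fact that $f^{n}$ is continuous: for any neighborhood $V$ of $f^{n}(x)$ the preimage $(f^{n})^{-1}(V)$ is a neighborhood of $x$, so it contains $y$, giving $f^{n}(y)\in V$.

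Applying this to the neighborhoods of the form $U[f^{n}(x)]$ with $U\in\mathcal{U}$, I obtain $(f^{n}(x),f^{n}(y))\in U$ for every $U\in\mathcal{U}$ and every $n\in\mathbb{Z}$. In particular, $(f^{n}(x),f^{n}(y))\in A$ for all $n\in\mathbb{Z}$, so $y\in\Gamma_{A}(x,f)$. Since $A$ is an expansive entourage, $\Gamma_{A}(x,f)=\{x\}$, which forces $y=x$. This shows $\bigcap\{U:U\in\mathcal{U}\}\subseteq\Delta_X$; the reverse inclusion holds because each $U\in\mathcal{U}$ contains $\Delta_X$ by axiom (1) of a uniformity. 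Invoking Lemma \ref{has} finishes the proof, and in fact simultaneously yields that $X$ is regular.

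The only subtle point is the middle step, that indistinguishability in $\tau_{\mathcal{U}}$ is preserved under the iterates of $f$ even though $f$ is not assumed to be uniformly continuous. I do not expect this to be a real obstacle, since it rests only on ordinary continuity of $f$ and $f^{-1}$ combined with the description of the topology $\tau_{\mathcal{U}}$ via the neighborhoods $U[x]$ recalled in the preliminaries. No appeal to the stronger hypotheses used in Proposition \ref{pp} is needed.
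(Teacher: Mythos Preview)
Your argument is correct and takes a genuinely different route from the paper. The paper proceeds directly: given distinct $x,y$, it picks a symmetric $E\in\mathcal{U}$ with $E\circ E\subseteq D$ (where $D$ is the expansive entourage), finds $n$ with $(f^{n}(x),f^{n}(y))\notin D$, and then exhibits the disjoint neighborhoods $f^{-n}(E[f^{n}(x)])$ and $f^{-n}(E[f^{n}(y)])$ explicitly. You instead invoke the characterization in Lemma~\ref{has} and show $\bigcap\mathcal{U}=\Delta_X$ by a continuity/indistinguishability argument: if $(x,y)\in\bigcap\mathcal{U}$, then $y$ lies in every $\tau_{\mathcal{U}}$-neighborhood of $x$, continuity of each $f^{n}$ (including negative $n$, which is legitimate since $f$ is a homeomorphism) pushes this forward, so $f^{n}(y)\in A[f^{n}(x)]$ for all $n$, and expansivity forces $x=y$. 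The paper's approach is more constructive and self-contained, since it builds the separating neighborhoods by hand without citing Lemma~\ref{has}; your approach is slightly more conceptual, sidesteps the $E\circ E\subseteq D$ refinement, and yields regularity (the content of the paper's subsequent Corollary) in the same stroke. Both are short and equally valid.
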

\begin{proof}
Let $D$ be an expansive entourage of $f$.  Since $\U$ is a uniformity on $X$ there exists a symmetric set $E\in\U$,  such that  $$EoE\subseteq D.$$
Given two distinct points $x$ and $y$ of $X$, by topological expansivity of $f$ there exists $n$ in $\mathbb{Z}$, such that $(f^{n}(x), f^{n}(y))\notin D$. But this implies $$(f^{n}(x), f^{n}(y))\notin E\circ E.$$
Let $U=f^{-n}\left(E[f^{n}(x)]\right)$ and $V=f^{-n}(E[f^{n}(y)])$. Then $int(U)$ and $int(V)$ are open subsets of $X$ with $x \in int(U)$ and $y \in int(V)$. Further, $U\cap V=\emptyset$. For, if $t \in U\cap V$, then $f^n(t) \in E[f^{n}(x)] \cap E[f^{n}(y)]$. But this implies that   $(f^{n}(x), f^{n}(y))\in E\circ E$, which is a contradiction.  Hence $X$ is a Hausdorff space.
\end{proof}

\medskip
\noindent Following Corollary is a consequence of just Proposition \ref{t1} and Lemma \ref{has}.

\medskip
\begin{corollary}
If uniform space $X$ admits a  topological expansive homeomorphism then $X$ is a regular space.
\end{corollary}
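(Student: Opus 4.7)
The plan is essentially a one-line combination of the two preceding results, so the proof proposal is very short. First I would invoke Proposition \ref{t1}: since $f : X \longrightarrow X$ is a topologically expansive homeomorphism of the uniform space $(X, \U)$, that proposition already gives that $X$ is Hausdorff.

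Second, I would feed this into Lemma \ref{has}. That lemma records the fact that for a uniform space the four conditions $T_1$, Hausdorff, $\bigcap_{U \in \U} U = \Delta_X$, and regularity are all equivalent. In particular Hausdorff $\Longrightarrow$ regular, so we immediately conclude that $X$ is regular.

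There is no real obstacle to overcome here; the whole content has already been set up. The only thing to be slightly careful about is that Lemma \ref{has} is a statement about the \emph{uniform topology}, so it applies precisely because the topology on $X$ is the one induced by the uniformity $\U$ that appears in Definition \ref{deft} of topological expansivity. Once that is noted, the two-step chain Proposition \ref{t1} $\Rightarrow$ Hausdorff $\Rightarrow$ (by Lemma \ref{has}) regular finishes the proof.
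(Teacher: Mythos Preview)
Your proposal is correct and is exactly the argument the paper gives: the corollary is stated there as an immediate consequence of Proposition~\ref{t1} (Hausdorff) together with Lemma~\ref{has} (Hausdorff $\Leftrightarrow$ regular for uniform spaces). Your remark that Lemma~\ref{has} applies because we are using the topology induced by the uniformity $\mathcal{U}$ is the only point worth making explicit, and you made it.
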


\medskip
\noindent Recall, for a compact Hausdorff space, $X$, all uniformities generates a same topology on the space and therefore it is sufficient to work with the natural uniformity on $X$. Hence as consequence of Proposition \ref{t1} we can conclude the following:

\medskip
\begin{corollary}
Topological expansivity on a compact Hausdorff uniform space does not depend on choice of uniformity on the  space.
\end{corollary}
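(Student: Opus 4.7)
The plan is to invoke the classical theorem, already recalled in the preliminaries of this paper via the reference to Kelly, which states that every compact Hausdorff topological space carries a unique uniformity compatible with its topology; this unique uniformity is the natural one, consisting of all neighborhoods of the diagonal $\Delta_X$ in $X\times X$. Everything else is a formal unpacking of Definition \ref{deft}.

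First I would note that the hypothesis of the corollary places us squarely in the setting where the uniqueness result applies: $X$ is compact Hausdorff, and (as justified in the paragraph preceding the corollary) Proposition \ref{t1} together with Lemma \ref{has} shows this is the only nontrivial context, since any uniform space admitting a topologically expansive homeomorphism is automatically Hausdorff. Consequently, if $\U_1$ and $\U_2$ are two uniformities on $X$ both inducing the given compact Hausdorff topology, then by the cited theorem of Kelly we must have
\[
\U_1 \;=\; \U_2 \;=\; \bigl\{ N \subseteq X\times X : N \text{ is a neighborhood of } \Delta_X \bigr\}.
\]

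Next I would observe that Definition \ref{deft} of topological expansivity, and the associated dynamical ball $\Gamma_A(x,f)$, refer only to the collection of entourages $\U$: a homeomorphism $f:X\to X$ is topologically expansive precisely when there is some $A\in\U$ with $\Gamma_A(x,f)=\{x\}$ for every $x\in X$. Since the collection $\U$ has just been shown to be uniquely determined by the topology of $X$, the property of being topologically expansive is a property of the pair (topological space, homeomorphism) alone, independent of any uniformity chosen to implement it.

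The main obstacle is essentially absent: the substantive work is already carried by the Kelly uniqueness theorem quoted earlier, and the corollary is a matter of unwinding definitions. The only real care needed is to be explicit that a \emph{compact Hausdorff uniform space} in the statement means a compact Hausdorff topological space together with some compatible uniformity, and the corollary asserts precisely that this additional datum is redundant for the purpose of defining topological expansivity.
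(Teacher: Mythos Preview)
Your proposal is correct and follows essentially the same approach as the paper: the corollary is deduced directly from the classical fact (recalled from Kelley) that a compact Hausdorff space admits a unique compatible uniformity, namely the natural one consisting of all neighborhoods of the diagonal, together with the observation that Definition~\ref{deft} depends only on the collection of entourages. Your write-up is simply a more detailed unpacking of the one-sentence justification the paper gives before stating the corollary.
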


\medskip
\noindent Since every compact metric space admits a unique uniform structure, it follows that on compact metric space metric expansivity, topological expansivity and expansivity are equivalent.

\medskip
\noindent Let $X$ be a uniform space with uniformity $\U$. A cover $\mathcal{A}$ of a space $X$ is a uniform cover if there is $U\in\mathcal{U}$ such that $U[x]$ is a subset of some member of the cover for every $x\in X$, equivalently,  $\{U[x] : x\in X\}\prec \mathcal{A}$. It is known that every open cover of a compact uniform space is uniform cover. For instance,  see Theorem 33 in \cite{kelly}.

\medskip
\noindent  Let $X$ be a topological space and $f : X \longrightarrow X$ be an orbit expansive homeomorphism with an orbit expansive covering $\mathcal{A}$. Equivalently, $f$ is orbit expansive if for every subset $B$ of $X$, $f^{n}(B)\prec \mathcal{A}$ for all $n\in\mathbb{Z}$, then $B$ is  singleton. In the following we show that on compact uniform space, topological expansivity is equivalent to orbit expansivity:

\medskip
\begin{proposition}\label{t2}
     Let $X$ be a compact uniform space with uniformity $\U$. Then $f : X \longrightarrow X$ is a topologically expansive homeomorphism if and only if it is an orbit expansive homeomorphism.
\end{proposition}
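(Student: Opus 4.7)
The plan is to prove both directions by using compactness to translate between entourages and finite open covers.

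For the easier direction $(\Leftarrow)$, assume $f$ is orbit expansive with orbit expansive cover $\mathcal{A}$. Since $X$ is a compact uniform space, $\mathcal{A}$ is a uniform cover by the result cited from Kelley, so there is an entourage $U \in \mathcal{U}$ with $\{U[x] : x \in X\} \prec \mathcal{A}$. I would then argue that $U$ is an expansive entourage: if $y \in \Gamma_U(x, f)$, then for every $n \in \mathbb{Z}$ both $f^n(x)$ and $f^n(y)$ lie in $U[f^n(x)]$, which is contained in some member of $\mathcal{A}$; orbit expansivity then forces $x = y$.

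For the direction $(\Rightarrow)$, assume $f$ is topologically expansive with expansive entourage $D$. Choose a symmetric entourage $E \in \mathcal{U}$ with $E \circ E \subseteq D$. For each $x \in X$, the set $E[x]$ is a neighborhood of $x$, so there is an open set $W_x$ with $x \in W_x \subseteq E[x]$. These form an open cover of $X$; by compactness, pick a finite subcover $\mathcal{A} = \{W_{x_1}, \ldots, W_{x_k}\}$. I would verify that $\mathcal{A}$ is an orbit expansive cover: if for each $n \in \mathbb{Z}$ the set $\{f^n(x), f^n(y)\}$ is contained in some $W_{x_{i_n}} \subseteq E[x_{i_n}]$, then $(x_{i_n}, f^n(x)) \in E$ and $(x_{i_n}, f^n(y)) \in E$, and symmetry of $E$ gives $(f^n(x), f^n(y)) \in E \circ E \subseteq D$. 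Hence $y \in \Gamma_D(x, f) = \{x\}$, so $x = y$.

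The only subtle point is ensuring that for each $x$ we can place an open set between $x$ and $E[x]$; this is immediate from the standard fact that $U[x]$ is a neighborhood of $x$ for every $U \in \mathcal{U}$, which is already invoked in the preliminaries section. I do not expect genuine obstacles here, but the main thing to be careful about is the direction of the composition $E \circ E$ and the symmetry of $E$, since the composition convention in the paper requires $E = E^{-1}$ in order to chain $(f^n(x), x_{i_n})$ with $(x_{i_n}, f^n(y))$.
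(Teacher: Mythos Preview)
Your proposal is correct and follows essentially the same route as the paper's proof: in the forward direction you use a symmetric $E$ with $E\circ E\subseteq D$ and extract a finite cover by sets contained in the $E[x]$'s, and in the backward direction you invoke the fact that every open cover of a compact uniform space is uniform to obtain an expansive entourage. Your version is in fact slightly more careful than the paper's in two places---you pass to genuinely open sets $W_x\subseteq E[x]$ (so that $\mathcal{A}$ is an \emph{open} cover as required by the definition) and you explicitly insist on the symmetry of $E$---while the paper additionally shrinks $U$ to a closed entourage $D\subseteq U$, a step that is not strictly needed for Definition~\ref{deft}.
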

   \begin{proof}
   Let $f$ be a topologically expansive homeomorphism with an expansive entourage  $D$, $D \in \U$. Choose $E\in \mathcal{U}$ such that $EoE\subseteq D$. Now, $E \in \U$ and $\U$ is a uniformity. Therefore $E$ contains diagonal and hence the collection $\left\{E[x] : x\in X\right\}$ is a cover of $X$ by neighbourhoods. But $X$ is compact. Let $\mathcal{A}$ be a finite subcover of $\left\{E[x] : x\in X\right\}$. We show that $\mathcal{A}$ is an orbit expansive covering for $f$.  For   $x, y \in X$ suppose that for each $n \in \Z$,  $\left\{f^{n}(x), f^{n}(y)\right\}\prec \mathcal{A}$. But this implies that for each $n \in \Z$,
   $$(f^n(x), f^n(y))\in EoE\subseteq D.$$
   Since $D$ is expansive entourage, it follows that $x=y$. Hence $\mathcal{A}$ is an orbit expansive covering.

\medskip
\noindent Conversely, let $\mathcal{A}$ be an orbit expansive covering of $f$. Since $X$ is a compact uniform space, $\mathcal{A}$ is a uniform cover. Therefore there exists $U\in\mathcal{U}$ such that $\{U[x]: x\in X\}\prec \mathcal{A}$. Since family of closed members of a uniformity $\mathcal{U}$ is a basis of $\mathcal{U}$, there is a closed member $D\in\mathcal{U}$ such that $D\subseteq U$. We claim that $D$ is an expansive entourage of $f$.   For $x,y\in X$ and for all $n \in \Z$, suppose
$$(f^n(x), f^n(y))\in D.$$
Therefore, for each $n \in \Z$,
$$\{f^n(x), f^n(y)\}\subseteq U[f^{n}(x)]. $$
This further implies that
$$\{f^n(x), f^n(y)\}\prec \{U[x]: x\in X\}\prec \mathcal{A}.$$
But $\mathcal{A}$ be an orbit expansive covering of $f$  and therefore $x=y$. Hence $f$ is topologically expansive with expansive entourage $D$.
\end{proof}

\medskip
\noindent In \cite[Theorem 2.7]{art} authors showed that if a compact Hausdorff topological space admits an orbit expansive homeomorphism then it is metrizable. Therefore by Proposition \ref{t1} and Proposition \ref{t2}, we have:

\medskip
\begin{corollary}\label{t3}
If a compact uniform space admits a topologically expansive homeomorphism, then it is always metrizable.
\end{corollary}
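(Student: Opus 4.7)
The plan is to chain together the three results that have just been established in the excerpt, applying them in sequence. Given a compact uniform space $X$ carrying a topologically expansive homeomorphism $f$, I would first invoke Proposition \ref{t1} to conclude that $X$ is Hausdorff. Once Hausdorffness is in hand, Proposition \ref{t2} applies, since it requires precisely that $X$ be a compact uniform space, and yields that $f$ is orbit expansive. Finally, $X$ is now a compact Hausdorff topological space admitting an orbit expansive homeomorphism, so the cited \cite[Theorem 2.7]{art} delivers metrizability.

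In this sense the statement is essentially a one-line corollary: Proposition \ref{t1} supplies the Hausdorff hypothesis that \cite[Theorem 2.7]{art} needs but that is not built into the definition of a uniform space, and Proposition \ref{t2} supplies the required orbit expansivity. No new construction, no new entourage, and no new cover is required beyond what has already been produced in the proofs of those two propositions.

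Because the real content has been extracted into the preceding results, I do not expect any genuine obstacle. The only point worth double-checking is the precise formulation of \cite[Theorem 2.7]{art} to confirm that its hypothesis is exactly compact Hausdorff (not, say, compact metric or second countable), but the authors have already paraphrased it in that form in the sentence preceding the corollary, so this is a bookkeeping check rather than a mathematical hurdle.
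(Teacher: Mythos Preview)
Your proposal is correct and matches the paper's own argument exactly: the authors derive the corollary in one line from Proposition~\ref{t1}, Proposition~\ref{t2}, and \cite[Theorem 2.7]{art}, just as you do. (One small remark: Proposition~\ref{t2} does not actually require Hausdorffness as a hypothesis, so you could invoke it before Proposition~\ref{t1}; Hausdorffness is needed only to feed into \cite[Theorem 2.7]{art}.)
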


\medskip
\noindent Again as a consequence of Corollary \ref{t3}, it follows that topological expansivity is equivalent with metric expansivity and it does not depend uniformity. However the following example shows that Corollary \ref{t3} is false for non-compact Hausdorff uniform spaces.

\begin{example}\label{t45}
Consider $\mathbb{R}$ with the topology $\tau_\mathbb{R}$ whose base consists of all intervals $[x, r)$, where $x$ is a real number, $r$ is a rational number and $x<r$.  Then $\R$ with topology $\tau_\R$ is a non--compact, paracompact, Hausdorff and not  metrizable space.  Also, it is known that every paracompact Hausdorff space,  admits the uniform structure $\mathcal{U}$, consisting of all neighborhood of the diagonal. For instance, see \cite[Page 208]{kelly}. Hence if
$$    D= \{(x, y)\in \mathbb{R}\times \mathbb{R} : |x-y|<1\},$$
then $D\in \mathcal{U}$. Define $f:\R \longrightarrow \R $ by $f(x)= 3x$. Then it is easy to see that $f$ is topologically expansive with expansive entourage $D$.  Note that $\R$ with uniformity $\U$ is a non-compact Hausdorff space.
\end{example}

 %Example \ref{linear} and Example \ref{linear2} imply that topological expansivity depend on choice of uniformity on non-compact spaces, also these example show that topological expansivity does not imply metric expansivity.

\section{Positively topologically expansive}

\medskip
\noindent In this Section we study a variant of a positively expansive map in a more general setting than uniform spaces. We note here that these results are always true if the space is a uniform space.

\medskip
\begin{definition}
Let $X$ be a topological space and let $f : X \longrightarrow X$ be a continuous function. Then $f$ is said to be a \emph{positively topological expansive}, if there is a neighborhood $U$ of the diagonal such that for any $x, y \in X$ with $x\neq y$ there exists a non--negative integer $n$ satisfying $(f^n(x), f^n(y))\notin U$. Neighborhood $U$ of the diagonal, $U\subseteq X\times X$ is called \emph{expansive neighborhood of $f$}.
 \end{definition}

\medskip
\noindent If $X$ is metric space and $f:X \longrightarrow X$ is positively metric expansive map with expansive constant $\delta>0$, then $U=\{(x, y): d(x, y)<\delta\}$ is an expansive neighborhood for $f$.  But the converse need not be true in general can be justified by the Example \ref{pe1}.

\medskip
\begin{example}\label{pe1}
Consider $X = [0,2]$ with usual metric. For $s \in [\sqrt{2}, 2]$, define a map $f_s : X \longrightarrow X$ as follows:
$$ f(x) = \left\{
\begin{array}{cc}
  sx, & 0 \leq x  \leq 1\\
  s(2-x), & 1 \leq x \leq 2
\end{array} \right. $$
Let $U_1$ be the subset of $[0,1]\times [0, 1]$ containing the diagonal of $[0, 1]$ and bounded by curves $h_1(x)=\frac{s+1}{2s}x^2$ and $h_2(x)=\sqrt{\frac{2s}{s+1}x}$. Further, Let $U_2$ be the subset of $[1,2]\times [1, 2]$ containing the diagonal of $[1, 2]$ and bounded by curves $g_1(x)=\frac{4s}{4s+2}+\frac{s+1}{4s+2}x^2$ and $g_2(x)=\sqrt{\frac{4s+2}{s+1}x-\frac{4s}{s+1}}$. If $U=U_1\cup U_2$, then it can be easily verified that each $f_s$ is positively topologically expansive. But it is known that there does not exists any metric positively expansive map on $X$ \cite{an}.
\end{example}

\medskip
\noindent In the following we give example of a map which is positively topological expansive for a neighborhood $U$ of diagonal which does not contain any subset $A_{\delta}=\{(x,y)\in X \times X : d(x,y) \leq \delta\}$.

\bigskip
\begin{example}
Consider $\R$ with usual topology. For each $x \in \R \backslash \Q$, let $\{x_i\}$ be a sequence in $\Q$ converging to $x$ in usual topology. Consider a new topology $\tau$ on $\R$ as follows:  If $y\in \R$ is a rational number then $\{y\}$ is open and if $y$ is an irrational number then basic open set about $y$ is given by $U_n(x)=\{y_i\}_{i=n}^\infty \cup \{y\}$, $n \in \N$. Note that intersection of 	any $U_n(x)$ with $\R \backslash \Q$ is singleton. Let $\delta > 0$ be any real number and $A_\delta =\left\lbrace (x, y) \in \R^2 : \left|x-y \right| \leq \delta \right\rbrace$. Set
$$U= \bigtriangleup_\R \cup \left(((\R \times \R) \backslash (\Q \times \Q))  \cap A_\delta \right) \cup \left((\R \times \R \backslash \Q)  \cap A_\delta \right).$$
Then $U$ is a neighborhood of $\bigtriangleup_\R$ and $U \subset A_\delta$. Define a map $f : \R \longrightarrow \R$ by $f(x) = 2x$. Then $f$ is positively expansive with expansive neighborhood $A_\delta$, for any $\delta >0$ and therefore $f$ is positively topological expansive with expansive neighborhood  $U$. Observe that for no $\delta$, $A_\delta \subset U$ as points of $A_\delta$ whose both coordinates are rational numbers are not in $U$.
\end{example}

\medskip
\noindent A neighborhood $U \subset X\times X$ of the diagonal $\Delta_X$ is said to be wide if there is a compact set $\dot{S} \subseteq X$ such that $U\bigcup \dot{S}\times X= X\times X$. In
other words, $U$ is wide if for any $x$ not in the compact set $\dot{S}$, the cross section $U[x]$ is in $X$. Also a set $M\subseteq X\times X$ is proper if for any compact subset $S$, the set $M[S]=\bigcap_{x\in S} M[x]$ is compact. For neighborhood $U$ of $\Delta_X$, denote
$$V_i^j(U)=\{(x, y)\in X\times X: (f^n(x), f^n(y))\in U \text{ \ for all \ } i\leq n\leq j\}. $$

\medskip
\begin{proposition}\label{a}
Let $f:X\to X$ be positively topological expansive.
\begin{enumerate}
  \item Let $X$ be a locally compact and paracompact space. Then $f$ has proper expansive neighborhood,
  \item Let $X$ be a locally compact and paracompact space and let $A$ be a proper expansive neighborhood for $f$. Then for every wide neighborhood $U$ of $\Delta_X$, there is $N\in\mathbb{N}$ such that $V_0^N(A)\subseteq U$.
  \item Let $X$ be a compact space and $U$ be expansive neighborhood for $f$. Then there is $m\geq 1$ such that $V^m_1(U)\subseteq V^m_0U)$.
      \item Let $X$ be a compact space and $U$ be expansive neighborhood for $f$. Then there is $m\geq 1$ such that for every $k\geq 0$ if $(f^k(x), f^k(y))\in V^m_1(U)$, then $(x, y)\in V^{m+k}_0(U)$.
\end{enumerate}
\end{proposition}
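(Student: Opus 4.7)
For (1), I would invoke the standard fact that every neighborhood of the diagonal in a locally compact paracompact space contains a closed \emph{proper} neighborhood (see Lemma~9 of \cite{T.D}, which the earlier part of the paper already uses). Given the expansive $U$, let $A\subseteq U$ be such a proper closed neighborhood; since $A\subseteq U$, any $n$ with $(f^n x,f^n y)\notin U$ also satisfies $(f^n x,f^n y)\notin A$, so $A$ inherits expansivity. For (2), I argue by contradiction: assume $U$ is wide but $V_0^N(A)\not\subseteq U$ for every $N$, and pick $(x_N,y_N)\in V_0^N(A)\setminus U$. Width of $U$ (via the compact $\dot{S}$ with $U\cup(\dot{S}\times X)=X\times X$) forces $x_N\in\dot{S}$, and properness of $A$ forces $y_N\in A[x_N]\subseteq A[\dot{S}]$, a compact set. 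Extract convergent subsequences $x_N\to x$, $y_N\to y$; continuity of $f^i$ together with the closedness of $A$ give $(f^i x,f^i y)\in A$ for every $i\ge 0$, hence $x=y$ by expansivity. But then $(x_N,y_N)\to(x,x)\in\Delta_X\subseteq\mathrm{int}(U)$, forcing $(x_N,y_N)\in U$ for large $N$, a contradiction.

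For (3), after replacing $U$ by a closed sub-entourage that remains expansive, the sets $V_1^m(U)=\bigcap_{n=1}^m(f^n\times f^n)^{-1}(U)$ form a decreasing nest of closed subsets of the compact space $X\times X$. The plan is to show $\bigcap_m V_1^m(U)\subseteq\mathrm{int}(U)$, after which the standard compactness lemma --- a nested family of compacta whose intersection lies in an open set must have some term inside that open set --- produces an $m$ with $V_1^m(U)\subseteq U$, and hence $V_1^m(U)\subseteq V_0^m(U)$. The substance reduces to the claim that if $(f^n x,f^n y)\in U$ for every $n\ge 1$, then $x=y$. Applying positive expansivity to the pair $(f(x),f(y))$, whose entire forward orbit lies in $U$, forces $f(x)=f(y)$; to then conclude $x=y$ one invokes the one-to-one hypothesis on $f$ that is in force in this section of the paper (and explicit in the companion result for positively metric expansive maps proved just above). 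Handling this injectivity step cleanly is the main obstacle.

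Finally, (4) follows from (3) by a finite backward induction on $k$. Given $(f^k x,f^k y)\in V_1^m(U)$, applying (3) to $(f^k x,f^k y)$ yields $(f^k x,f^k y)\in U$, so the consecutive block of indices $n$ with $(f^n x,f^n y)\in U$ extends from $[k+1,k+m]$ to $[k,k+m]$. Now $(f^{k-1}x,f^{k-1}y)$ has its iterates $1,\dots,m$ in $U$, so (3) places it in $U$ as well; iterating this $k$ times delivers $(f^j x,f^j y)\in U$ for all $0\le j\le m+k$, that is, $(x,y)\in V_0^{m+k}(U)$.
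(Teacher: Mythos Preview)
Your treatment of (1), (2), and (4) is exactly the paper's: it merely cites Lemmas~9 and~18 of \cite{T.D} for (1) and (2), and for (4) writes ``apply item (3) with $f^j(x)$ and $f^j(y)$ in place of $x$ and $y$, for $j=k,k-1,\ldots,1,0$''---precisely your backward induction.

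For (3) your nested-compacta packaging differs only cosmetically from the paper's direct sequence argument (choose $(x_m,y_m)\in V_1^m(U)\setminus U$, pass to a subsequential limit $(x,y)$ with $x\neq y$, obtain $(f^i(f(x)),f^i(f(y)))\in U$ for all $i\ge 0$, contradict expansivity). The substantive step is the same in both: one must rule out the possibility $f(x)=f(y)$ with $x\neq y$, and you correctly flag that this requires $f$ to be injective. One small inaccuracy: injectivity is \emph{not} a standing hypothesis of this proposition---it only enters in the next one---so your phrase ``in force in this section'' overstates matters. The paper's own proof simply asserts ``$x\neq y$ \ldots\ imply $f(x)\neq f(y)$'' without justification, so the gap you spotted is present there as well; since (3) and (4) are only invoked downstream under the injectivity hypothesis, no real damage is done, but your instinct to isolate this as ``the main obstacle'' is well placed.
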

\begin{proof}
Proof of items (1) and (2) are similar to proof of Lemma 9 and Lemma 18 in \cite{T.D}. So we omit it here. We prove (3).

\medskip
\noindent Suppose for every $m\in\mathbb{N}$, there is $(x_m, y_m)\in V^m_1(U)$ such that $(x_m, y_m)\notin U$. Choose convergent sub sequences $x_{n_k}$ and $y_{n_k}$ such that $x_{n_k}$ converges to $x$ and $y_{n_k}$ converges to $y$. Then $x\neq y$ and for every every $i\geq 1$, $\left\lbrace (f^i(x_{n_k}), f^i(y_{n_k}))\right\rbrace $ will converge to $ (f^i(x), f^i(y))$. Now $x\neq y$ and  $(x_{n_k}, y_{n_k})\in V^{n_k}_1(U)$ imply $f(x)\neq f(y)$ and  $(f^i(f(x)), f^i(f(y)))\in U$ for all $i\geq 0$. This contradicts  positive topological  expansiveness of $f$.

\medskip
\noindent For Part (4). Fix $k\geq 0$, and apply item (3)with $f^j(x)$ and $f^j(y)$ in place of $x$ and $y$, for $j=k, k-1, \ldots, 1, 0$.
\end{proof}

\medskip
\noindent Let $X$ be a compact space and $U$ be expansive neighborhood for $f$. Choose neighborhood $D$ of $\Delta_X$ such that  $D\circ D\subseteq U$.  For every $z\in X$, take $U(z)=\{x\in X: (x, z)\in V^m_1(D)\}$. It is clear that $\{U(z): z\in X\}$ is an open cover for $X$. Since $X$ is compact, it has finitely many, say $N$, open sets of the form $U(z)$. Consider a finite subset containing $N+1$ points. If for every $k\geq 0$, there exist $x_k\neq y_k$ in this subset such that $(f^k(x_k), f^k(y_k))\in V^N_1(U)$. Hence by item (4) of Proposition \ref{a}, $(x_k, y_k)\in V^{m+k}_0(U)$. Since there are only finitely many pairs of these $N+1$ points, there exist $x\neq y$ such that for infinitely  many $k\in\mathbb{N}$ we have $(x, y)\in V^i_0(U)$ for $i=0, 1, 2, \ldots, n+k$ . This implies that $x=y$ that is a contradiction. Hence we have following:

\medskip

\begin{proposition}
Let $X$ be a compact space uniform space and $f : X \longrightarrow X$ be an injective continuous map. Suppose $f$ is positively topological expansive. Then $X$ is finite.
\end{proposition}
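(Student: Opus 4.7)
The plan is a double pigeonhole argument by contradiction: assuming $X$ is infinite I will produce two distinct points $p \neq q$ whose entire forward orbit stays paired inside the expansive neighborhood, contradicting positive topological expansivity of $f$. Parts (3) and (4) of Proposition \ref{a} supply the key ingredient, namely that a single long finite segment of ``future proximity'' in $V^m_1(U)$ propagates back to proximity over the full stretch $V^{m+k}_0(U)$.

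First I would fix an expansive neighborhood $U$ of $\Delta_X$ for $f$, and choose an open neighborhood $D$ of $\Delta_X$ with $D \circ D \subseteq U$. By part (4) of Proposition \ref{a} applied to $D$, there exists $m \geq 1$ such that for every $k \geq 0$, whenever $(f^k(x), f^k(y)) \in V^m_1(D)$ one has $(x,y) \in V^{m+k}_0(D)$. Next, for each $z \in X$ set $U(z) = \{x \in X : (x,z) \in V^m_1(D)\}$. Continuity of $f, f^2, \dots, f^m$ together with openness of $D$ makes each $U(z)$ open, and $z \in U(z)$ since $\Delta_X \subseteq D$. Thus $\{U(z) : z \in X\}$ is an open cover of $X$, which by compactness admits a finite subcover $\{U(z_1), \dots, U(z_N)\}$.

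Now assume for contradiction that $X$ is infinite and pick $N+1$ distinct points $p_0, \dots, p_N \in X$. For each $k \geq 0$, pigeonhole on the $N$-element subcover applied to $\{f^k(p_0), \dots, f^k(p_N)\}$ yields indices $i_k \neq j_k$ with $f^k(p_{i_k})$ and $f^k(p_{j_k})$ in a common $U(z_\ell)$; using $D \circ D \subseteq U$ this gives $(f^k(p_{i_k}), f^k(p_{j_k})) \in V^m_1(U)$, and Proposition \ref{a}(4) then converts this into $(p_{i_k}, p_{j_k}) \in V^{m+k}_0(U)$. Since there are only finitely many ordered pairs from the $(N+1)$-point set, a second pigeonhole supplies a fixed pair $(p_i, p_j)$ with $i \neq j$ that equals $(p_{i_k}, p_{j_k})$ for infinitely many $k$; for this pair $p_i \neq p_j$ we have $(p_i, p_j) \in V^{m+k}_0(U)$ for arbitrarily large $k$, whence $(f^n(p_i), f^n(p_j)) \in U$ for \emph{every} $n \geq 0$, the desired contradiction. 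The step I expect to need the most care with is verifying openness of $U(z)$ and the inclusion $V^m_1(D) \circ V^m_1(D) \subseteq V^m_1(U)$; both are routine but crucially use that $D$ is chosen open with $D \circ D \subseteq U$, and that a single symmetric $D$ controls all iterates $1, \dots, m$ simultaneously.
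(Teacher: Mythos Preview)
Your argument is essentially identical to the paper's own proof: the same cover $U(z)=\{x:(x,z)\in V^m_1(D)\}$, the same double pigeonhole (first on the $N$ sets, then on the finitely many pairs), and the same appeal to Proposition~\ref{a}(4) to push proximity backward. One small slip: you invoke part~(4) for $D$ to obtain $m$, but then apply the conclusion with $U$ in place of $D$; simply take $m$ from part~(4) applied to $U$ (and choose $D$ symmetric so that $V^m_1(D)\circ V^m_1(D)^{-1}\subseteq V^m_1(U)$), and the argument is clean.
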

\bigskip
\noindent Let $X$ be a topological space and $f: X \longrightarrow X$ be a continuous map. A subset $A$ of $X$ is said to be \textit{invariant se}t if $f(A)=A$. For a subset $N$, let $Inv N$ denote the maximum invariant subset of $N$. Then
$$Inv N = \left\{x \in N : \exists \; ..., x_{-1}, x_0, x_1,... \in N \mbox{ such that } x = x_0 \mbox{ and } f(x_k) = x_{k+1}, \forall k \in \Z \right\}.$$

\medskip
\noindent In the following we show that the maximum invariant subset of a positively topological expansive map is the diagonal of space.

\medskip
\begin{proposition}\label{positive1}
	Let $X$ be a topological space and $f : X \longrightarrow X$ be a positively topologically expansive map with an expansive neighborhood $U$. Then the maximum invariant subset of $U$ is the diagonal of $X$.
\end{proposition}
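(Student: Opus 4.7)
The plan is to work with the product map $f \times f : X \times X \to X \times X$ and unpack the definition: $\mathrm{Inv}(U)$ consists of those $(x, y) \in U$ that lie on some bi-infinite orbit $\{(x_n, y_n)\}_{n \in \Z}$ contained in $U$, meaning $(x_0, y_0) = (x, y)$, $(x_n, y_n) \in U$ for every $n \in \Z$, and $f(x_n) = x_{n+1}$, $f(y_n) = y_{n+1}$ for all $n$. I will establish the equality $\mathrm{Inv}(U) = \Delta_X$ by proving the two inclusions separately.

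For the nontrivial direction $\mathrm{Inv}(U) \subseteq \Delta_X$, I would take any $(x, y) \in \mathrm{Inv}(U)$ together with its bi-infinite orbit $\{(x_n, y_n)\}$. The key move is to apply positive topological expansivity not at the original pair $(x, y)$ but at each shifted pair $(x_k, y_k)$. For fixed $k \in \Z$ and every $n \geq 0$, the orbit condition gives $(f^n(x_k), f^n(y_k)) = (x_{k+n}, y_{k+n}) \in U$, so $U$ being an expansive neighborhood for $f$ forces $x_k = y_k$. Specializing to $k = 0$ yields $x = y$, hence $(x, y) \in \Delta_X$.

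For the reverse inclusion $\Delta_X \subseteq \mathrm{Inv}(U)$, I would observe that $\Delta_X \subseteq U$ because $U$ is a neighborhood of the diagonal, and that $\Delta_X$ is preserved by $f \times f$: any bi-infinite orbit of $x$ under $f$ lifts to a diagonal orbit $(x_n, x_n)$ entirely inside $\Delta_X \subseteq U$. Thus $\Delta_X$ is an $(f \times f)$-invariant subset of $U$ and, by maximality of $\mathrm{Inv}(U)$, is contained in it.

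The main obstacle is conceptual rather than computational: one must recognize that positive expansivity has to be invoked at each link $(x_k, y_k)$ of the orbit, not merely at the original $(x, y)$. Once this is noticed, the forward invariance of the orbit inside $U$ supplies exactly the hypothesis demanded by the definition of a positively topological expansive map, and the conclusion is immediate.
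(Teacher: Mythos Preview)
Your argument is correct and matches the paper's: both unpack the definition of $\mathrm{Inv}(U)$ to obtain a bi-infinite orbit in $U$ and then apply the expansive-neighborhood condition to the forward part of that orbit. The paper, however, applies expansivity only once, directly at $(x_0,y_0)=(x,y)$---since $(f^n(x),f^n(y))=(x_n,y_n)\in U$ for all $n\geq 0$ already forces $x=y$---so your detour through every shifted pair $(x_k,y_k)$ (only to specialize back to $k=0$) is harmless but unnecessary, and what you flag as the ``main obstacle'' is not actually an obstacle at all.
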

\begin{proof}
Given that $U$ is an expansive neighborhood for map $f$. Therefore
$$\bigtriangleup_X \subset U \subset X\times X.$$
Let $(x,y) \in Inv \ U$, the largest invariant subset of $U$. Then there exists a sequence of points  $\left\lbrace(x_{i},y_{i}) : i \in \Z \right\rbrace$ in  $U$ such that
$$(x,y) = (x_0, y_0) \mbox{ and } \left(f(x_k), f(y_k)\right) = \left(x_{k+1},y_{k+1}\right),  \forall \ k \in \Z .$$
Therefore, for $n \geq 0$,
$$\left(x_{n+1}, y_{n+1}\right)=\left(f^n(x), f^n(y) \right)\in U.$$	
But $U$ is expansive neighborhood for $f$ will imply that $x=y$. Hence,
$$Inv U \subset \bigtriangleup_X.$$
Thus the maximum invariant set in $U$ is $\bigtriangleup_X$.
\end{proof}

\medskip
\noindent In the following Proposition we give sufficient conditions to imply positively topological expansive  map.
\begin{proposition}\label{t1}
Let $X$ be a  topological space and $f:X \longrightarrow X$ be a continuous map.  Then $f$  is a positively topological expansive map if one of the following condition holds.
\begin{enumerate}
  \item There is a finite open cover $\mathcal{U}=\{ U_1,\ldots, U_m \}$ of $X$ such
that if for all $n \in \N$, $\{f^n(x), f^n(y)\}\prec \mathcal{U}$, then $x =y$.
  \item If  $T_1-$ space $X$ has an open cover $\mathcal{U}=\left\{U_i : 0 \leq i \leq m\right\}$  such
that for every open cover $\mathcal{V}$ there is $n\in\mathbb{N}$ such that $f^{-n}(\mathcal{U})\prec \mathcal{V}$.
\item If  $T_1-$space $X$ has an open cover $\mathcal{U}=\left\{U_i : 0 \leq i \leq n\right\}$ such that for every open cover $\mathcal{V}$ of $X$, there is an $N\in\mathbb{N}$ satisfying $\bigwedge_{0\leq i\leq N}f^{-i}(\mathcal{U})\prec \mathcal{V}$.
\end{enumerate}
\end{proposition}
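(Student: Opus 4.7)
The plan is to use a single candidate expansive neighbourhood for all three cases, namely
\[
V \;=\; \bigcup_{U\in\mathcal{U}} U\times U \;\subseteq\; X\times X.
\]
This set is open in $X\times X$ (each $U\times U$ is open in the product topology) and contains $\Delta_X$ because $\mathcal{U}$ covers $X$, so it is a legitimate candidate for an expansive neighbourhood. The key reformulation is that $(a,b)\notin V$ if and only if no single $U\in\mathcal{U}$ contains both $a$ and $b$, i.e.\ $\{a,b\}\not\prec\mathcal{U}$. Thus in each of the three cases it suffices to show: whenever $x\neq y$, there is a non--negative integer $n$ with $\{f^n(x), f^n(y)\}\not\prec \mathcal{U}$.

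Case (1) is then a direct contrapositive of the hypothesis, applied to any pair $x\neq y$. For (2), given distinct $x,y$ in the $T_1$-space $X$, I would build the open cover $\mathcal{V}=\{X\setminus\{x\},\; X\setminus\{y\}\}$ (both sets are open since singletons are closed in a $T_1$-space, and the family is a cover because $x\in X\setminus\{y\}$ and $y\in X\setminus\{x\}$). The hypothesis then provides $n\in\mathbb{N}$ with $f^{-n}(\mathcal{U})\prec\mathcal{V}$, meaning each $f^{-n}(U_i)$ omits $x$ or $y$; equivalently, each $U_i$ fails to contain $f^n(x)$ or $f^n(y)$, so $\{f^n(x),f^n(y)\}\not\prec\mathcal{U}$, as desired.

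For (3), with the same cover $\mathcal{V}=\{X\setminus\{x\}, X\setminus\{y\}\}$, the hypothesis supplies $N\in\mathbb{N}$ with $\bigwedge_{0\leq i\leq N} f^{-i}(\mathcal{U})\prec\mathcal{V}$. I would now argue by contradiction: if $\{f^n(x),f^n(y)\}\prec\mathcal{U}$ held for every $n\in\{0,1,\ldots,N\}$, then for each such $n$ I could pick $U_{j_n}\in\mathcal{U}$ containing both $f^n(x)$ and $f^n(y)$, and the element $\bigcap_{n=0}^{N} f^{-n}(U_{j_n})$ of the join would contain both $x$ and $y$; but by refinement this intersection must lie inside either $X\setminus\{x\}$ or $X\setminus\{y\}$, which is impossible. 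So some $n\in\{0,\ldots,N\}$ must satisfy $\{f^n(x),f^n(y)\}\not\prec\mathcal{U}$, giving $(f^n(x),f^n(y))\notin V$.

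The three arguments thus share a common skeleton and differ only in how one extracts the bad iterate $n$ from the given covering hypothesis. The only point that really requires attention is the construction of the separating cover $\mathcal{V}=\{X\setminus\{x\}, X\setminus\{y\}\}$, which is where the $T_1$ hypothesis in (2) and (3) enters (and is not needed in (1)); the remaining refinement and join manipulations are routine.
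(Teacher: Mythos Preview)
Your proof is correct and follows essentially the same approach as the paper: the same candidate expansive neighbourhood $V=\bigcup_{U\in\mathcal{U}}U\times U$ is used, and for parts (2) and (3) the same separating cover $\mathcal{V}=\{X\setminus\{x\},\,X\setminus\{y\}\}$ is invoked via the $T_1$ hypothesis. Your treatment of part (3) is in fact more explicit than the paper's, which simply notes that the argument is similar to that of part (2).
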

\begin{proof}
\begin{enumerate}
  \item suppose that there is a finite open cover $\mathcal{U}=\left\{ U_1,\ldots, U_m \right\}$ of $X$ satisfying the hypothesis. Take $U= \bigcup _{i=1}^m U_i\times U_i$. Then $U$ is a subset of $X \times X$ containing $\Delta_X$. For $x, y \in X$ and any $n \in \N$, suppose $(f^n(x), f^n(y))\in U$. Then for each $n \in \N$,
$$\{f^n(x), f^n(y)\}\prec \mathcal{U}.$$
But by hypothesis this implies that $x=y$. Hence $f$ is positively topological expansive.
  \item By item (1), it is sufficient to show that for finite open cover $\mathcal{U}=\{ U_0, U_1,\ldots, U_n \}$ of $X$ and for $x, y \in X$, with $x\neq y$, there is $n\in\mathbb{N}$ such that $\{f^n(x), f^n(y)\}\nprec \mathcal{U}$. Now, $X$ is $T_1-$space and therefore  $\mathcal{V}=\left\{X-\{x\}, Y-\{y\}\right\}$ is an open cover of $X$. Hence there exist  $n\in\mathbb{N}$  such that
 $$ f^{-n}(\mathcal{U})\prec \mathcal{V}.$$
 But $\{x, y\}\nprec \mathcal{V}$. Therefore hence
 $$\{x, y\}\nprec f^{-n}(\mathcal{U}).$$
 Hence the proof.
 \item Proof is similar to item (2)
\end{enumerate}
\end{proof}

\medskip
\noindent The next example shows that Proposition \ref{positive1} is not true if in items(2) and (3) we do not assume that the space is $T_1$.

\medskip
\begin{example}
Consider $X=[0,1)$ with the topology $\tau_X=\{(a, 1): a\geq 0\}\cup \{X, \emptyset\}$. Then $X$ is  compact but not $T_1$ with respect to topology $\tau_X$. Take open cover $\mathcal{U}= \{X\}$. Further, if $\mathcal{V}$ is an open cover of $X$, then $X\in \mathcal{V}$, hence  $\mathcal{U}\prec \mathcal{V}$. Therefore the identity map $id:X\longrightarrow X$ satisfies in Part (3) of Proposition \ref{t1}, but it is not positively topological expansive.
\end{example}

\medspace
\noindent A $T_1$ finite topological space $X$ is discrete and if $\mathcal{U}=\{\{x\}: x\in X\}$, then for every open cover $\mathcal{V}$, we have $\mathcal{U}\prec \mathcal{V}$. Hence Proposition \ref{positive1} implies that identity map $id:X\to X$ is positively topological expansive.

\medskip
\noindent Suppose $f$ is a positively topological expansive map with an expansive neighborhood $U$. Therefore, $n \in \N$, $(f^n(x), f^n(y))\in U$, then $x=y$. But this implies that  $\bigcap_{n\in\mathbb{N}} (f\times f)^{-n}(U)= \Delta_X$. Hence if expansive neighborhood for $f$ is closed, then $X$ is a Hausdorff space. In the following we give an example of a positively topological expansive map on a non--Hausdorff space and hence it may be happen that an expansive neighborhood is not a closed subset of $X\times X$.

\medskip
\begin{example}\label{pnh}
 Let $X$ be a compact metric space with topology $\tau$ generated by the metric and let $f:X\longrightarrow X$ be positive expansive map with fixed point $x_0\in X$. Then we can find an open cover  $\mathcal{U}=\left\{U_i : 0 \leq i \leq n\right\}$ such that for any other open cover $\mathcal{V}$,  $$\bigwedge_{i=0}^Nf^{-i}(\mathcal{U})\prec \mathcal{V}$$
 for some $N\geq 0$. Consider a new point $x_1$ and set $\overline{X}=X\cup \{x_1\}$, equipped with the topology
 $$\overline{\tau}= \tau \cup \left\{W\cup \{x_1\}: x_0\in W, W\in\tau\right\}\cup \left\{(W\setminus \{x_0\})\cup \{x_1\}: x_0\in W, W\in \tau\right\}.$$
 By \cite[Example 3.14]{art}, $\overline{X}$  is compact $T_1$- space. Moreover, if $x_0$ is not an isolated point of $X$, then $\overline{X}$ is not a Hausdorff space. Define a function $g:\overline{X}\longrightarrow \overline{X}$ by $g(x)=f(x)$ if $x\in X$ and $g(x_1)=x_1$. Then $g$ is a continuous map. Further, suppose that $x_0\in U_n$ and $x_0\notin U_k$ for all $k=1, 2, \ldots, n-1$. Define the open set
 $$U_{n+1}= U_n\setminus  \{x_0\}\cup \{x_1\}.$$
 Since $x_0, x_1$ are fixed points of $g$, it follows that $x_0, x_1\notin g^j(U_k)$ for all $j=0, 1, \ldots,$ and all $k=1, 2, \ldots, n-1$. We show that if $\mathcal{F}= \{U_1, U_2, \ldots, U_n, U_{n+1}\}$, then for every open cover $\overline{\mathcal{V}}$ of $\overline{X}$ there is $N\in\mathbb{N}$ satisfying $$\bigwedge_{i=0}^Ng^{-i}(\mathcal{F})\prec \overline{\mathcal{V}}.$$
 Let $\overline{\mathcal{V}}=\{V_1, V_2, \ldots, V_r\}$ be an arbitrary cover of $\overline{X}$ with $x_0\in V_{r-1}$ and $x_1\in V_r$. For every $i=1, \ldots, r$, take $W_i= V_i-\{x_0, x_1\}$ and define
 $$W_{r+1}= ((V_{r-1}\cap V_r)\cup \{x_0\})\setminus \{x_1\}.$$
 Then $W= \{W_1, W_2, \ldots, W_r, W_{r+1}\}$ is an open cover of $X$ and $\mathcal{W}\prec \overline{\mathcal{V}}$. But this implies there is an $N\in\mathbb{N}$ such that
 $$\bigwedge_{i=0}^Nf^{-i}(\mathcal{U})\prec \mathcal{W}.$$
 Take a sequence $\left\{U_{k(i)} : 0 \leq i \leq N\right\} \subseteq \mathcal{F}$. Suppose that
 $$x_1\notin \bigcap_{i=0}^Ng^{-i}(U_{k(i)}).$$
 Then
 $$\bigcap_{i=0}^Ng^{-i}(U_{k(i)})\subseteq \bigcap_{i=0}^Nf^{-i}(U_{k(i)})\prec \mathcal{W}\prec \overline{\mathcal{V}}.$$
 If $x_1\in \bigcap_{i=0}^Ng^{-i}(U_{k(i)})$, then
 $$U_{k(i)}= U_{n+1}=U_n\setminus  \{x_0\}\cup \{x_1\}.$$
 This implies that
    $$\bigcap_{i=0}^Ng^{-i}\left(U_{k(i)}\right)=\left(\bigcap_{i=0}^Nf^{-i}(U_n)\setminus \{x_0\}\right)\cup \{x_1\}\prec \mathcal{W}\cup \{V_r\}\prec \overline{V}.$$
Thus in any case by Proposition \ref{t1}, $g:\overline{X}\to \overline{X}$ is positively topological expansive, while $\overline{X}$ is a compact non- Hausdorff space.
 \end{example}

\end{large}

\begin{thebibliography}{99}


\bibitem{art} M. Achigar, A. Artigue, I. Monteverde, \emph{Expansive homeomorphisms on non--Hausdorff spaces}, Top.  \& its  Appl., Vol.  \textbf{207} (2016), 109--122.

\bibitem{an} N. Aoki, K. Hiraide, \emph{Topological Theory of Dynamical Systems}, North-Holland Math. Library, Vol. 52, North-Holland Publishing Co., Amsterdam, 1994.

%\bibitem{ar3} A. Artigue, D. Carrasco-Olivera, \emph{A Note On Measure-Expansive Diffeomorphisms}, J. Math. Anal. Appl., Vol. \textbf{428} (2015), No. 1, 713--716.
%
%\bibitem{ar4} A. Artigue, \emph{Anomalous cw--Expansive Surface Homeomorphisms} Discrete Contin. Dyn. Syst., Vol. \textbf{36} (2016), 3511--3518.


\bibitem{ali} A. Barzanouni, \emph{Finite expansive homeomorphisms}, Top. \& its Appl., Vol.  \textbf{253} (2019), 95--112.

\bibitem{ase} A. Barzanouni,     M. S. Divandar,  E. Shah,  \textit{On Properties of Expansive Group Actions}, to appear in Acta Mathematica Vietnamica.

\bibitem{rb} R. Bowen, \textit{Entropy--expansive maps}, Trans. of the AMS, Vol. \textbf{164} (1972), 323--331.

\bibitem{b.f} B.F. Bryant, \emph{D.B. Coleman, Some expansive homeomorphisms of the reals}, Am. Math. Mon., Vol. \textbf{73} (1966) 370--373

\bibitem{ck} E. M. Coven,  M. Keane,  \textit{Every compact metric space that supports a positively expansive homeomorphism is finite}, IMS Lecture Notes–Monograph Series Dynamics \& Stochastics Vol. \textbf{48} (2006) 304–305

\bibitem{pram} P. Das,  T. Das. \emph{Various types of shadowing and specification on uniform spaces}, J. Dyn. Control Syst., Vol. \textbf{24} (2018), No. 2, 253--267.

\bibitem{T.D} T. Das, K. Lee, D. Richeson, J. Wiseman. \textit{Spectral decomposition for topologically Anosov homeomorphisms on noncompact and non-metrizable spaces}, Top. \& its Appl.,  Vol. \textbf{160}(2013), 149--158.

\bibitem{HK-2}
H. Kato,
\emph{The nonexistence of expansive homeomorphisms of Peano continua in the plane},
Top. \& its Appl., Vol. \textbf{34}~(1990), no. 2, 161--165.

\bibitem{Hi0}
K. Hiraide,
\emph{Expansive homeomorphisms of compact surfaces are pseudo-Anosov},
Osaka J. Math., \textbf{27}~(1990), no. 1, 117--162.

\bibitem{HK} H. Kato,  \textit{Continuum--wise expansive homeomorphisms}, Canad. J. Math.,  Vol. \textbf{45} (1993), 576--598.


\bibitem{kelly} John L. Kelley, \emph{General Topology}, D. Van Nostrand Company, Inc., Toronto, New York, London, 1955.

\bibitem{Kou} M. Kouno,
\emph{On expansive homeomorphisms on manifolds},
J. Math. Soc. Japan, Vol. \textbf{33}~(1981), No. 3, 533--538.

\bibitem{rm}
R. Ma\~{n}\'e,
\emph{Expansive homeomorphisms and topological dimension},
Trans. Amer. Math. Soc., Vol. \textbf{252}~(1979), 313--319.

%\bibitem{CM} C. Morales, \emph{A generalization of expansivity}, Dis. \& Conti. Dyn. Sys., Vol. \textbf{32} (2012), No. 1, 293--301.
%
%\bibitem{CM1} C. Morales, \emph{On supports of expansive measures}, preprint.

\bibitem{wr} W. L. Reddy, \textit{Point--wise expansion homeomorphisms}, J. Lond. Math. Soc., Vol. \textbf{2} (1970),  232--236.

\bibitem{D.R} D. Richeson, J. Wiseman, \textit{Positively expansive dynamical systems}. Top. \& its Appl., Vol. \textbf{154} (2007), 604--613.

\bibitem{SS} S. Schwartzman, \emph{On Transformation Groups}, Dissertation, Yale University, 1952.

\bibitem{es} E. Shah, \emph{Positively expansive maps on} $G-$\emph{spaces}, J.  Indian Math. Soc., Vol. \textbf{72} (2005), 91--97.

\bibitem{U} W.R. Utz, \textit{Unstable homeomorphisms}, Proc. Amer. Math. Soc., Vol. \textbf{1} (1950),  769--774.

\bibitem{Wei} A. Weil, \emph{Sur les espaces $\grave{a}$ structure uniforme et sur la topolgie g$\grave{e}$n$\grave{e}$rale}, Hermann, Paris, 1938.


\end{thebibliography}
\end{document}